\documentclass[11pt]{article}
\makeatletter
\newcommand{\labitem}[2]{%
\def\@itemlabel{\text{#1}}
\item
\def\@currentlabel{#1}\label{#2}}
\makeatother
\usepackage{setspace}

\usepackage{chngcntr}

\usepackage{booktabs}

\usepackage{amsmath}
\usepackage{extsizes}

\usepackage{amsthm}
\usepackage{epsfig}
\usepackage{amsfonts}
\newtheorem{theorem}{Theorem}
\usepackage[left=3.7cm, right=3.7cm,bottom = 3cm, top=2.9cm]{geometry}
\usepackage{graphicx}
\usepackage{courier}
\usepackage{colortbl}
\usepackage{multirow}
\pagestyle{headings}
\pagestyle{plain}
\RequirePackage[OT1]{fontenc}
\RequirePackage{amsthm,amsmath}

\usepackage{graphicx} 
\usepackage{amsmath}
\usepackage{amssymb}
\usepackage{float}
\usepackage{enumerate}

\usepackage{booktabs}
\usepackage{enumerate, marvosym,enumitem}

\usepackage{array, xcolor, lipsum, bibentry}
\usepackage[round]{natbib}

\newcommand{\vertiii}[1]{{\left\vert\kern-0.25ex\left\vert\kern-0.25ex\left\vert #1
    \right\vert\kern-0.25ex\right\vert\kern-0.25ex\right\vert}}

\newtheorem{cor}{Corollary}

\newtheorem{assump}{Assumption}

\newtheorem{condition}{Condition}

\newtheorem{lema}{Lemma}

	\providecommand{\keywords}[1]{\textbf{\textbf{Keywords:}} #1}

\bibliographystyle{apalike}

\begin{document}

\title{Honest confidence regions and optimality in high-dimensional precision matrix estimation
}
%
%
\author{Jana Jankov\'a       \and
        Sara van de Geer 
}
%

%
%
%
\maketitle

\begin{abstract}
\noindent
We propose methodology for estimation of sparse precision matrices and statistical inference for their low-dimensional parameters in a high-dimensional setting where the number of parameters $p$ can be much larger than the sample size. We show that the novel estimator achieves minimax rates in supremum norm and the low-dimensional components of the estimator have a Gaussian limiting distribution. These results hold uniformly over the class of precision matrices with row sparsity of small order $\sqrt{n}/\log p$ and spectrum uniformly bounded, under a sub-Gaussian tail assumption on the margins of the true underlying distribution. Consequently, our results lead to uniformly valid confidence regions for low-dimensional parameters of the precision matrix. Thresholding the estimator leads to variable selection without imposing irrepresentability conditions. The performance of the method is demonstrated in a simulation study and on real data.

\vskip 0.2cm
\noindent
\keywords{precision matrix \and sparsity \and inference \and asymptotic normality \and confidence regions}
\vskip 0.2cm

\noindent
\begin{subclass}{62J07 \and 62F12  }
\end{subclass}
\end{abstract}
\noindent

\section{Introduction}
We consider the problem of estimation of the inverse covariance matrix in a high-dimensional setting, where the number of parameters $p$ can significantly exceed the sample size $n$. 
Suppose that we are given an $n\times p$ design matrix $\mathbf X$, where the rows of $\mathbf X$ are $p$-dimensional i.i.d. random vectors from an unknown distribution  with mean zero and covariance matrix $\Sigma_0\in\mathbb R^{p\times p}.$ 
We denote the precision matrix by $\Theta_0:=\Sigma_0^{-1}$, assuming the inverse of $\Sigma_0$ exists.
\\
The problem of estimating the precision matrix arises in a wide range of applications. Precision matrix estimation in particular plays an important role in graphical models that have become a popular tool for representing dependencies within large sets of variables. 
Suppose that we associate the variables $X_1 ,\dots,X_p$ with the vertex set $\mathcal V =
\{1,\dots,p\}$ of an undirected graph $G = (\mathcal V,\mathcal E)$ with an edge set $\mathcal E$. A graphical
model $G$ represents the conditional dependence relationships between the variables, namely every pair of variables not contained in the edge set is conditionally independent given all remaining variables. If the vector $(X_1 ,\dots,X_p)$ is normally distributed, each
edge corresponds to a non-zero entry in the precision matrix (\citet{lauritzen}). 
Practical examples of applications of graphical modeling include modeling of brain connectivity
based on FMRI brain analysis \citep{fmri}, genetic networks, financial data processing, social network analysis
and climate data analysis.\\
A lot of work has been done on methodology for \emph{point} estimation of precision matrices. We discuss some of the approaches below, but a selected list of papers includes for instance  \citet{buhlmann,glasso,bickel2,yuan2,cai,sunzhang}.
A common approach assumes that the precision matrix is sufficiently sparse and employs the $\ell_1$-penalty to induce a sparse estimator. The main goal of these works is to show that, under some regularity conditions, the sparse estimator  behaves almost as the oracle estimator that has the  knowledge of  the true sparsity pattern.\\
Our primary interest in this paper lies not in {point} estimation, but we aim to quantify uncertainty of estimation by providing \textit{interval} estimates for the entries of the precision matrix. 
The challenge of this problem arises since asymptotics of regularized estimators which are the main tool in high-dimensional estimation is not easily tractable \citep{knight2000}, as opposed to the classical setting when the dimension of the unknown parameter is fixed.


\subsection{Overview of related work} 
Methodology for inference in high-dimensional models has been mostly studied in the context of linear and generalized linear regression models. 
From the work on linear regression models, we mention the paper by \cite{zhang} where a semi-parametric projection approach using the Lasso methodology \citep{lasso} was proposed,
which was further developed and studied in \cite{vdgeer13}. The approach leads to asymptotically normal estimation of the regression coefficients and an extension of the method to generalized linear models is given in \cite{vdgeer13}.
 The method requires sparsity of small order $\sqrt{n}/\log p$ in the high-dimensional parameter vector and uses $\ell_1$-norm error bound of the Lasso.
Further alternative methods for inference in the linear model have been proposed and studied in \cite{stanford1}, \cite{belloni1} and bootstrapping approach was suggested in \cite{bootstrap1}, \cite{bootstrap2}.
\\
Other lines of work on inference for high-dimensional models suggest post-model selection procedures, where in the first step a regularized estimator is used for model selection and in the second step e.g. a maximum likelihood estimator is applied on the selected model.  
In the linear model, simple post-model selection methods have been proposed e.g. in \cite{gausslasso}, \cite{candes2007}. 
These approaches are however only guaranteed to work under irrepresentability 
and beta-min conditions (see \cite{hds}). Especially in view of inference, beta-min conditions which assume that the non-zero parameters are sufficiently large in absolute value, should be avoided. 
\\
\noindent
In this paper, we consider estimation of precision matrices, which is a problem related to linear regression, however, it is a non-linear problem and thus it requires a more involved treatment.  
One approach to precision matrix estimation is based on regularization of the maximum likelihood in terms of the $\ell_1$-penalty. This approach is typically referred to as the graphical Lasso, and  has been studied in detail in several papers, see e.g. \cite{glasso}, \cite{rothman}, \cite{ravikumar} and \cite{yuan}. 
Another common approach to precision matrix estimation is based on projections. This approach reduces the problem to a series of regression problems and estimates each column of the precision matrix using a Lasso estimator or Dantzig selector \citep{candes2007}. The idea was first introduced in \cite{buhlmann} as neighbourhood selection for Gaussian graphical models and further studied in \cite{yuan2}, \cite{cai} and \cite{sunzhang}. 
\\
Methodology leading to statistical inference for the precision matrix has been studied only recently. The work \cite{zhou} proposes to use a more involved variation of the  regression approach to obtain an estimator which leads to statistical inference. This approach leads to an estimator of the precision matrix which is elementwise asymptotically normal, under row sparsity of order $\sqrt{n}/\log p$, bounded spectrum of the true precision matrix and Gaussian distribution of the sample.   
The paper \cite{jvdgeer14} proposes a method for statistical inference based on the graphical Lasso. The work introduces a de-sparsified estimator based on the graphical Lasso, which is also shown to be elementwise asymptotically normal. 
%
\subsection{Contributions and outline}
We propose methodology leading to honest confidence intervals and testing for low-dimensional parameters of the precision matrix, without  requiring irrepresentability conditions or beta-min conditions to hold.
Our work is motivated by the semi-parametric approach in \cite{vdgeer13} and is  a follow-up of the work \cite{jvdgeer14}. 
Compared to the previous work on statistical inference for precision matrices, this methodology  has several advantages. 
Firstly, the estimator 
we propose is a simple modification of the nodewise Lasso estimator proposed in \cite{buhlmann}. Hence the estimator is easy to implement and efficient solutions are available on the computational side. 
Secondly, the novel estimator enjoys a range of optimality properties and leads to statistical inference under mild conditions. 
 Firstly, the asymptotic distribution of low-dimensional components of the estimator is shown to be Gaussian.
This holds uniformly over the class of precision matrices with row sparsity of order $o(\sqrt{n}/\log p)$, spectrum uniformly bounded in $n$ and sub-Gaussian margins of the underlying distribution.  
This results in honest confidence regions \citep{li} for low-dimensional parameters.
The proposed estimator achieves rate optimality as shown in Section \ref{subsec:trates}. 
Moreover, the de-sparsified estimator may be thresholded to guarantee  variable selection without imposing irrepresentable
conditions. 
The computational cost of the method is order $\mathcal O(p)$ Lasso regressions for estimation of all parameters and two Lasso regressions for a single parameter. 
\\
\\
The paper is organized as follows.  
Section \ref{subsec:est} introduces the methodology. Section \ref{subsec:main} contains the main theoretical results for estimation and inference and in Section \ref{subsec:varsel} the suggested method is applied to variable selection. 
Section \ref{subsec:comparison} provides a comparison with related work.
Section \ref{subsec:sim} illustrates the theoretical results in a simulation study.
In Section \ref{subsec:real}, we analyze two real datasets and apply our method to variable selection. Section \ref{subsec:conclusions} contains a brief summary of the results.  Finally, the proofs were deferred to the Supplementary material. 
\\
\\
\noindent
\textit{Notation.} For a vector $x=(x_1,\dots,x_d)\in\mathbb R^d$ and $p\in (0,\infty]$ we use $\|x\|_p$ to denote the $p-$norm of $x$ in the classical sense. We denote $\|x\|_0 = |\{i:x_i\not =0\}|.$ For a matrix $A\in \mathbb R^{d\times d}$ we use the notations $\vertiii{ A}_\infty=\max_{i} \|e_i^T A\|_1$, $\vertiii{A}_1= \vertiii{ A^T}_\infty$ and $\|A\|_\infty=\max_{i,j}|A_{ij}|$.
The symbol $\text{vec}(A)$ denotes the vectorized version of a matrix $A$ obtained by stacking the rows of $A$ on each other.
By $e_i$ we denote a $p$-dimensional vector of zeros with one at position $i$.
For real sequences $f_n,g_n$, we write $f_n=O(g_n)$ if $|f_n| \leq C |g_n|$ for some $C>0$ independent of $n$ and all $n>C.$ 
We write 
$f_n\asymp g_n$ if both $f_n = \mathcal O(g_n)$ and $1/f_n =\mathcal O(1/g_n)$ hold. 
Finally, $f_n=o(g_n)$ if $\lim_{n\rightarrow \infty} f_n/g_n =0.$ Furthermore, for a sequence of random variables $x_n$ we write 
$x_n=\mathcal O_{\mathbb P}(1)$ if $x_n $ is bounded in probability and we write $x_n=\mathcal O_{\mathbb P}(r_n)$ if $x_n/r_n = \mathcal O_{\mathbb P}(1).$ We write $x_n=o_{\mathbb P}(1)$ if $x_n$ converges in probability to zero. \\
Let $\rightsquigarrow$ denote the convergence in distribution and $\stackrel{P}{\rightarrow }$ the convergence in probability.
Let $\Phi$ denote the cumulative distribution function of a standard normal random variable. 
By $\Lambda_{\min}( A)$ and $\Lambda_{\max}(A)$ we denote the minimum and maximum eigenvalue of $A$, respectively. 
Let $a \vee b$, $a \wedge b$ denote $\max(a,b),$ $\min(a,b),$ respectively. 
We use letters $C,c$ to denotes universal constants. These are used in the proofs repeatedly to denote possibly different  constants.

\section{De-sparsified nodewise Lasso}
\label{subsec:est}
Our methodology is a simple modification of the nodewise Lasso estimator proposed in \cite{buhlmann}. 
The idea is to remove the bias term which arises in the nodewise Lasso estimator due to $\ell_1$-penalty regularization.  
This approach in inspired by literature on semiparametric statistics
\cite{bickelbook,vdv}.
 We note several papers have used this idea in the context of high-dimensional sparse estimation, see \cite{zhang,vdgeer13,vdgeer14,stanford1,jvdgeer14}.\vskip 0.2cm
We first summarize the nodewise Lasso method introduced in \cite{buhlmann} and discuss some of its properties. This method estimates an unknown precision matrix using the idea of projections to approximately invert the sample covariance matrix.
For each $j=1,\dots,p$ we define  
the vector $ \gamma_j=\{\gamma_{j,k},k\not = j\}$ as follows
\begin{equation}\label{true}
\gamma_j := \text{arg}\min_{\gamma\in\mathbb R^{p-1}} \mathbb E\|X_j - \mathbf X_{-j}\gamma\|_2^2/n
\end{equation}
and denote $\eta_j := X_j-\mathbf X_{-j}\gamma_j$  and the noise level by $\tau_j^2 = \mathbb E\eta_j^T \eta_j/n.$
We define the column vector $\Gamma_j:= (-\gamma_{j,1},\dots, -\gamma_{j,j-1},1, -\gamma_{j,j+1} ,\dots,-\gamma_{j,p})^T.$
Then one may  show
\begin{equation}\label{idnr}
\Theta_0 = (\Theta^0_1,\dots,\Theta^0_p)=(\Gamma_1/\tau_1^2,\dots,\Gamma_p/\tau_p^2),
\end{equation}
where $\Theta^0_j$ is the $j$-th column of $ \Theta_0.$ Hence the precision matrix $\Theta_0$ may be recovered from the partial correlations $\gamma_{j,k} $ and from the noise level $\tau_j^2.$ In our problem, we are only given the design matrix $\mathbf X$. The idea of nodewise Lasso is to estimate the partial correlations and the noise levels by doing a projection of every column of the design matrix on all the remaining columns. In low-dimensional settings, this procedure would simply recover the sample covariance matrix $\mathbf X^T \mathbf X/n.$ However, due to the high-dimensionality of our setting, the matrix $\mathbf X^T \mathbf X/n$ is not invertible and we can only do approximate projections. If we assume sparsity in the precision matrix (and thus also in the partial correlations), this idea can be effectively carried out using the Lasso.   Hence, for each $j=1,\dots,p$ define the estimators of the regression coefficients, $\hat\gamma_j =\{\hat\gamma_{j,k},k=1,\dots,p,j\not = k\} \in \mathbb R^{p-1}$, as follows
\begin{equation}\label{NR}
\hat\gamma_j := \text{arg}\min_{\gamma\in\mathbb R^{p-1}} \|X_j - \mathbf X_{-j}\gamma\|_2^2/n + 2\lambda_j \|\gamma\|_1.
\end{equation}
We further define the column vectors
$$\hat{\Gamma}_j := (-\hat\gamma_{j,1},\dots, -\hat\gamma_{j,j-1},1, -\hat\gamma_{j,j+1} ,\dots,-\hat\gamma_{j,p})^T,$$
and  estimators of the noise level 
$$\hat\tau_j^2 := \|X_j - \mathbf X_{-j}\hat\gamma_j\|_2^2/n + \lambda_j \|\hat\gamma_j\|_1,$$
for $j=1,\dots,p$.
Finally, we define the $j$-th column of the nodewise Lasso estimator $\hat\Theta$ as
\begin{equation}\label{nrdef}
\hat{\Theta}_j := \hat{\Gamma}_j/\hat\tau_j^2.
\end{equation}

\noindent
The estimator $\hat \Theta_j$ of the precision matrix was studied in several papers (following \cite{buhlmann}) and has been shown to enjoy oracle properties under mild conditions on the model. These conditions  include bounded spectrum of the precision matrix, row sparsity of small order $n/\log p$ and a sub-Gaussian distribution of the rows of $\mathbf X$ (alternatively to sub-Gaussianity, one may assume that the covariates are bounded as in \cite{vdgeer13}).  
Our approach uses the nodewise Lasso estimator as an initial estimator. The next step involves de-biasing or de-sparsifying, which may be viewed 
as one step using the Newton-Raphson scheme for numerical optimization. 
This is equivalent to ``inverting'' the Karush-Kuhn-Tucker (KKT) conditions by the  inverse of the Fisher information as in \cite{vdgeer13}. The challenge then also comes from the need to estimate the Fisher information matrix which is a $p^2\times p^2$ matrix. We show that the estimator $\hat\Theta$ can be used in a certain way to create a surrogate of the inverse Fisher information matrix. 
Since the estimator $\hat\Theta_j$ can be characterized by its KKT conditions, it is convenient to work with these conditions to derive the new de-sparsified estimator.
Consider hence the KKT conditions for the optimization problem (\ref{NR}) 
\begin{equation}\label{kkt}
-\mathbf X_{-j}^T(X_j-\mathbf X_{-j}\hat\gamma_j)/n + \lambda_j \hat \kappa_j = 0,
\end{equation}
for $j=1,\dots,p$, where $\hat\kappa_j$ is the sub-differential of the function $\gamma_j \mapsto \|\gamma_j\|_1$ at $\hat\gamma_j,$ i.e. 
\[ 
\hat\kappa_{j,k} = 
\begin{cases}
 \text{sign}(\hat\gamma_{j,k}) & \text{if } \hat\gamma_{j,k}\not = 0\\
 a_{j,k} \in [-1,1] & \text{otherwise},
\end{cases}
\]
where $k\in \{1,\dots,p\}\setminus \{j\}$.
If we define $\hat Z_j$ to be a $p\times 1$ vector 
$$\hat {Z}_j:= ( \hat \kappa_{j,1},\dots,\hat \kappa_{j,j-1},0, \hat \kappa_{j,j+1},\dots,\hat \kappa_{j,p} )/{\hat\tau_j^2},$$
then the KKT conditions may be equivalently stated as follows
\begin{equation}\label{mkkt}
\hat{\Sigma} \hat{\Theta}_j -  e_j - \lambda_j\hat{ Z}_j=0, \text{ for }j=1,\dots,p,
\end{equation}
where $\hat\Sigma = \mathbf X^T \mathbf X/n$ is the sample covariance matrix. 
\noindent
This is shown in Lemma 11 
in the Supplementary material. 
Consequently, the KKT conditions \eqref{mkkt} imply a bound $\|\hat\Sigma \hat\Theta_j - e_j\|_\infty \leq \lambda_j/\hat\tau_j^2$ for each $j=1,\dots,p,$ which will be useful later.
\noindent
Note that the KKT conditions 
may be equivalently summarized in a matrix form as 
$\hat\Sigma \hat\Theta - I -  \hat Z\Lambda=0 ,$
where the columns of $\hat Z$ are given by $\hat Z_j$ for $j=1,\dots,p$ and $\Lambda$ is a diagonal matrix with elements $(\lambda_1,\dots,\lambda_p)$. 
\\
Multiplying the KKT conditions
(\ref{mkkt}) by $\hat\Theta_i$, we obtain 
$$\hat\Theta_i^T (\hat\Sigma \hat\Theta_j - e_j) -  \hat\Theta_i^T\lambda_j\hat Z_j=0.$$
Then we note that adding $\hat\Theta_{ij}-\Theta^0_{ij}$ to both sides 
and rearranging we get
\begin{eqnarray}\label{motiv}
\hat\Theta_{ij} -\hat\Theta_i^T\lambda_j\hat Z_j - \Theta^0_{ij} 
&=& 
\hat\Theta_{ij} -\hat\Theta_i^T (\hat\Sigma \hat\Theta_j - e_j) - \Theta^0_{ij} \\\nonumber
&=&
-(\Theta^0_i)^T (\hat\Sigma-\Sigma_0)\Theta^0_j + \tilde\Delta_{ij},
\end{eqnarray}
where $\tilde \Delta_{ij}=-(\hat\Theta_{i}-\Theta_{i}^0)^T (\hat \Sigma \hat\Theta_j -e_j) - (\hat\Theta_{j}-\Theta^0_j)^T (\hat\Sigma\Theta^0_i-e_i)$ is a term which can be shown to be $o_{\mathbb P}(n^{-1/2})$ under certain conditions (Lemma \ref{rem}).
Hence we define the de-sparsified nodewise Lasso estimator 
\begin{equation}\label{desp}
\hat T:= \hat\Theta - \hat\Theta^T(\hat\Sigma \hat\Theta - I) = \hat\Theta + \hat\Theta^T - \hat\Theta^T \hat\Sigma \hat\Theta.
\end{equation}

\section{Theoretical results}
\label{subsec:main}
\noindent
In this part, we inspect the asymptotic behaviour of the de-sparsified nodewise Lasso estimator (\ref{desp}). In particular we consider the limiting distribution of individual entries of $\hat T$ and show the convergence to the Gaussian distribution is uniform over the considered model. 
For construction of confidence intervals, we consider estimators of the asymptotic variance of the proposed estimator, both for Gaussian and sub-Gaussian design.
We derive convergence rates of the method in supremum norm and consider application to variable selection.
\\
For completeness, in Lemma \ref{rates} in the Supplementary material, we summarize convergence rates of the nodewise Lasso estimator. The result is essentially the same as Theorem 2.4 in \cite{vdgeer13} so the proof of the common parts is omitted.
Recall that  $\mathcal V=\{1,\dots,p\}$ and we define 
the row sparsity by $s_j := \|\Theta_j^0\|_0$, maximum row sparsity by $s:= \max_{1 \leq j\leq p} s_j$ and the coordinates of non-zero entries of the precision matrix by $S_0:= \{(i,j)\in \mathcal V\times \mathcal V :\Theta^0_{ij} \not = 0\}$. 
For the analysis below, we will need the following conditions.
\begin{enumerate}[label=A\arabic*,start=1]
\item \label{eig}
(Bounded spectrum) The inverse covariance matrix $\Theta_0:=\Sigma_0^{-1}$ exists
and there exists a universal constant $L\geq 1$ such that 
$$1/L \leq \Lambda_{\min}(\Theta_0) \leq \Lambda_{\max}(\Theta_0) \leq L.$$
\vskip 0.2cm
\item \label{sparsity}
(Sparsity)
$ {\frac{s \log p}{n}} = o(1).$
\vskip 0.2cm
\item 
\label{subgv}
(Sub-Gaussianity condition)
Suppose that the design matrix $\mathbf X$ has uniformly sub-Gaussian rows $X_i$, i.e. 
there exists a universal constant $K$ such that 
$$\sup_{\alpha\in\mathbb R^p:\|\alpha\|_2\leq 1}\mathbb E\exp\left({{|\alpha^T X_i|^2}/{K^2 }}\right) 
\leq 
2  \quad (i=1,\dots,n).$$
\end{enumerate}
The lower bound in \ref{eig} guarantees that the noise level $\tau^2_j=1/\Theta_{jj}^0$ does not diverge.
The upper bound (equivalently lower bound on eigenvalues of $\Sigma_0$) guarantees that the compatibility condition (see \cite{hds})  is satisfied for the  matrix $\Sigma^0_{-j,-j}$, which is the true covariance matrix $\Sigma_0$ without the $j$-th row and $j$-th column. 
The sub-Gaussianity condition  \ref{subgv} is used to obtain concentration results which are crucial to our analysis. 
Condition \ref{subgv} is also used to ensure that the compatibility condition  is satisfied for $\hat \Sigma$ with high probability (see \cite{hds}). 
Conditions \ref{eig},
\ref{sparsity} 
and \ref{subgv} are the same conditions as used in \cite{vdgeer13} to obtain rates of convergence for the nodewise regression estimator. 
Define the parameter set 
$$\mathcal G(s) := 
\{\Theta\in\mathbb R^{p\times p}: \max_{1\leq i\leq p}\|\Theta_i\|_0 \leq s,
\ref{eig} \text{ is satisfied}\}.$$
The following lemma shows that the proposed estimator $\hat T$ can be decomposed into a pivot term and a term which is of small order $1/\sqrt{n}$ with high probability.

\begin{lema} \label{rem} 
Suppose that $\hat\Theta$ is the nodewise Lasso estimator with regularization parameters $\lambda_j \geq c 
\sqrt{\frac{\log p}{n}}$, uniformly in $j$, for some sufficiently large constant $c>0$. Suppose that \ref{sparsity} and \ref{subgv} 
are satisfied. 
Then for each $(i,j)\in \mathcal V\times \mathcal V$ it holds
\begin{equation}\label{main}
\sqrt{n}(\hat T_{ij}-\Theta^0_{ij}) = 
- \sqrt{n}(\Theta^0_{i})^T(\hat\Sigma -\Sigma_0)\Theta^0_{j} + {\Delta_{ij}},
\end{equation}
where there exists a constant $C>0$ such that
$$\lim_{n\rightarrow \infty }\sup_{\Theta_0\in\mathcal G(s)} \mathbb P\left(
\max_{i,j=1,\dots,p}|\Delta_{ij}| \geq C \frac{s {\log p}}{\sqrt{n}} \right)=0.$$
\end{lema}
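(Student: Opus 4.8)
The plan is to reduce the assertion to the algebraic identity already recorded in (\ref{motiv}) and then bound the remainder term by term. First I would note that, entrywise, the definition (\ref{desp}) gives $\hat T_{ij} = \hat\Theta_{ij} - \hat\Theta_i^T(\hat\Sigma\hat\Theta_j - e_j)$, since $e_i^T\hat\Theta^T(\hat\Sigma\hat\Theta - I)e_j = (\hat\Theta e_i)^T(\hat\Sigma\hat\Theta_j - e_j)$. The KKT conditions (\ref{mkkt}) then let me replace $\hat\Sigma\hat\Theta_j - e_j$ by $\lambda_j\hat Z_j$, so the left-hand side of (\ref{motiv}) is exactly $\hat T_{ij} - \Theta^0_{ij}$. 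Combining with (\ref{motiv}) yields $\hat T_{ij} - \Theta^0_{ij} = -(\Theta^0_i)^T(\hat\Sigma - \Sigma_0)\Theta^0_j + \tilde\Delta_{ij}$, and multiplying by $\sqrt n$ identifies the claimed $\Delta_{ij} = \sqrt n\,\tilde\Delta_{ij}$. It therefore suffices to prove $\max_{i,j}|\tilde\Delta_{ij}| = O_{\mathbb P}(s\log p/n)$, with a bound uniform over $\mathcal G(s)$.

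For the first summand of $\tilde\Delta_{ij}$, namely $(\hat\Theta_i - \Theta_i^0)^T(\hat\Sigma\hat\Theta_j - e_j)$, I would apply H\"older's inequality together with the KKT bound $\|\hat\Sigma\hat\Theta_j - e_j\|_\infty \leq \lambda_j/\hat\tau_j^2$ noted after (\ref{mkkt}), obtaining $\|\hat\Theta_i - \Theta_i^0\|_1\,\lambda_j/\hat\tau_j^2$. Lemma \ref{rates} supplies $\|\hat\Theta_i - \Theta_i^0\|_1 = O_{\mathbb P}(s\sqrt{\log p/n})$ uniformly in $i$, while $\lambda_j \asymp \sqrt{\log p/n}$ and, under \ref{eig}, $\hat\tau_j^2$ is bounded away from zero with probability tending to one. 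The product is thus $O_{\mathbb P}(s\log p/n)$.

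For the second summand, $(\hat\Theta_j - \Theta_j^0)^T(\hat\Sigma\Theta_i^0 - e_i)$, the key simplification is $\hat\Sigma\Theta_i^0 - e_i = (\hat\Sigma - \Sigma_0)\Theta_i^0$, which holds because $\Sigma_0\Theta_i^0 = e_i$. H\"older then bounds the term by $\|\hat\Theta_j - \Theta_j^0\|_1\,\|(\hat\Sigma - \Sigma_0)\Theta_i^0\|_\infty$. The crucial point, and the step I expect to be the main obstacle, is to show that $\max_i\|(\hat\Sigma - \Sigma_0)\Theta_i^0\|_\infty = O_{\mathbb P}(\sqrt{\log p/n})$ \emph{without} incurring a spurious $\sqrt s$ factor; the naive estimate $\|\hat\Sigma - \Sigma_0\|_\infty\|\Theta_i^0\|_1$ loses a $\sqrt s$ and would only deliver $s^{3/2}\log p/n$. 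Instead I would write each coordinate $e_k^T(\hat\Sigma - \Sigma_0)\Theta_i^0$ as a centred average of the products $X_{mk}\,(X_m^T\Theta_i^0)$. Under \ref{subgv} and \ref{eig} the factor $X_m^T\Theta_i^0$ is sub-Gaussian with parameter controlled by $\|\Theta_i^0\|_2 \leq L$, so each product is sub-exponential with uniformly bounded Orlicz norm, and a Bernstein-type maximal inequality over the $p$ indices $k$ and the $p$ indices $i$ gives the claimed rate. Since $\|\hat\Theta_j - \Theta_j^0\|_1 = O_{\mathbb P}(s\sqrt{\log p/n})$, this summand is again $O_{\mathbb P}(s\log p/n)$.

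Finally I would combine the two estimates, take the maximum over all $p^2$ pairs $(i,j)$, which costs only an extra $\log p$ already absorbed into the concentration rates, and multiply by $\sqrt n$ to conclude $\max_{i,j}|\Delta_{ij}| = O_{\mathbb P}(s\log p/\sqrt n)$. Uniformity over $\Theta_0 \in \mathcal G(s)$ follows because every constant entering the rate bounds of Lemma \ref{rates} and the concentration inequalities above depends only on the universal constants $L$ and $K$ of \ref{eig} and \ref{subgv} and on the common sparsity level $s$, never on the particular matrix $\Theta_0$. Hence there is a constant $C$, independent of $\Theta_0$, for which the probability bound holds uniformly, giving $\sup_{\Theta_0 \in \mathcal G(s)}\mathbb P(\max_{i,j}|\Delta_{ij}| \geq C s\log p/\sqrt n) \to 0$ as required.
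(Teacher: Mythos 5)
Your proposal is correct and takes essentially the same route as the paper's proof: the same decomposition \eqref{motiv} into two remainder terms, the same KKT-plus-$\ell_1$-rate bound for the term involving $\hat\Sigma\hat\Theta_j - e_j$, and the same sub-Gaussian product/Bernstein concentration bound for $\max_i\|(\hat\Sigma-\Sigma_0)\Theta_i^0\|_\infty$ — which is precisely Lemma \ref{conc} in the paper's appendix — followed by the identical uniformity argument from the universality of the constants in \ref{eig} and \ref{subgv}. The only difference is presentational: you sketch a direct derivation of that concentration inequality where the paper cites it, and you bound $\max_{i,j}|\Delta_{ij}|$ in one pass where the paper first does a single entry and then repeats the argument in matrix norms on $\cap_{j}\mathcal T_j$.
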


\noindent
From Lemma \ref{rem} it follows that 
we need to assume stronger sparsity condition than \ref{sparsity} for the remainder term $\Delta_{ij}$ to be negligible after normalization by $\sqrt{n}$. This is accordance with other literature on the topic, see \cite{vdgeer13}, \cite{zhou}.
Hence we introduce the following strengthened sparsity condition. 
\begin{enumerate}[label=A\arabic*$^*$,start=2]
\item
\label{sparsity.n}
$\frac{s\log p}{\sqrt{n}} = o(1).$
\end{enumerate}
The next result shows that the elements of $\hat T$ are indeed asymptotically normal.
To this end, we further define the asymptotic variance 
$$\sigma_{ij}^2 := \text{var}({(\Theta^0_{i})}^T X_{1}X_{1}^T {\Theta^0_{j}}).$$ 
In some of the results to follow, we shall assume a universal lower bound on $\sigma_{ij}$ as follows. 
\begin{enumerate}[label=A\arabic*,start=4]
\item\label{pos.var}
There exists a universal constant $\omega >0 $ such that
$\sigma_{ij} \geq \omega.$
\end{enumerate}
Assumption \ref{pos.var} is satisfied e.g. under Gaussian design and \ref{eig}. 
Denote a parameter set 
$$\tilde{\mathcal G}(s) := 
\{\Theta\in\mathbb R^{p\times p}: \max_{1 \leq i \leq p}\|\Theta_i\|_0 \leq s,
\ref{eig}, \ref{pos.var} \text{ are satisfied}
\}.$$
\begin{theorem}[Asymptotic normality] 
\label{an} 
Suppose that $\hat\Theta$ is the nodewise Lasso estimator with regularization parameters $\lambda_j \asymp
\sqrt{\frac{\log p}{n}}$ uniformly in $j$.  
Suppose that  \ref{sparsity.n} and \ref{subgv} are satisfied.
Then for every $(i,j)\in \mathcal V\times \mathcal V$ and $z\in\mathbb R$ it holds
$$\lim_{n\rightarrow \infty }\sup_{\Theta_0  
 \in  \tilde{\mathcal G}  (s)}|
\mathbb P_{\Theta_0}\left( \sqrt{n}{(\hat T_{ij}-\Theta_{ij}^0)}/{\sigma_{ij}} \leq z \right) - \Phi(z)| = 0.$$
\end{theorem}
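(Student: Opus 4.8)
The plan is to build on Lemma~\ref{rem}, which already supplies the key decomposition
\[
\sqrt{n}(\hat T_{ij}-\Theta^0_{ij}) = -\sqrt{n}(\Theta^0_i)^T(\hat\Sigma-\Sigma_0)\Theta^0_j + \Delta_{ij},
\]
with a remainder that is uniformly $o_{\mathbb{P}}(1)$ once the strengthened sparsity condition \ref{sparsity.n} holds: indeed $\sqrt{n}\cdot(s\log p/\sqrt{n})\cdot n^{-1/2} = s\log p/\sqrt{n}=o(1)$ by \ref{sparsity.n}, so the event in Lemma~\ref{rem} forces $\Delta_{ij}\to 0$ in probability uniformly over $\tilde{\mathcal G}(s)\subseteq\mathcal G(s)$. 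Thus the entire burden reduces to establishing a uniform (in $\Theta_0$) central limit theorem for the pivot term, together with the asymptotic consistency of dividing by $\sigma_{ij}$.

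First I would rewrite the pivot term as a centered sample average. Writing $X_\ell$ for the $\ell$-th row of $\mathbf X$ and using $\hat\Sigma = n^{-1}\sum_{\ell=1}^n X_\ell X_\ell^T$, one has
\[
-\sqrt{n}(\Theta^0_i)^T(\hat\Sigma-\Sigma_0)\Theta^0_j
= -\frac{1}{\sqrt{n}}\sum_{\ell=1}^n\Bigl((\Theta^0_i)^T X_\ell X_\ell^T\Theta^0_j - (\Theta^0_i)^T\Sigma_0\Theta^0_j\Bigr).
\]
Define $W_\ell := (\Theta^0_i)^T X_\ell X_\ell^T\Theta^0_j$; these are i.i.d.\ across $\ell$ with mean $(\Theta^0_i)^T\Sigma_0\Theta^0_j = \Theta^0_{ij}$ (since $\Sigma_0\Theta^0_j = e_j$) and variance exactly $\sigma_{ij}^2$, matching the stated definition. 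So the pivot is $-\sigma_{ij}^{-1}n^{-1/2}\sum_\ell(W_\ell-\mathbb{E}W_\ell)$ after normalization, a standardized mean of i.i.d.\ quantities, and the target is a standard normal limit uniform over the model class.

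The main obstacle is obtaining a CLT that is \emph{uniform} over $\tilde{\mathcal G}(s)$, since the summands $W_\ell$ depend on $\Theta_0$, and a fixed-distribution Lindeberg--L\'evy CLT does not suffice. The natural tool is a Berry--Esseen bound: for standardized sums one controls $\sup_z|\mathbb{P}(\sigma_{ij}^{-1}n^{-1/2}\sum_\ell(W_\ell-\mathbb{E}W_\ell)\le z)-\Phi(z)|$ by $C\,\rho_{ij}/(\sqrt{n}\,\sigma_{ij}^3)$, where $\rho_{ij}=\mathbb{E}|W_1-\mathbb{E}W_1|^3$ is the third absolute central moment. The crux is then to bound $\rho_{ij}$ uniformly and bound $\sigma_{ij}$ away from zero uniformly. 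The lower bound on $\sigma_{ij}$ is supplied directly by \ref{pos.var} (which is why $\tilde{\mathcal G}(s)$ rather than $\mathcal G(s)$ appears in the statement). For the upper bound on $\rho_{ij}$ I would use the sub-Gaussianity \ref{subgv} together with the spectral bound \ref{eig}: since $\|\Theta^0_i\|_2\le\Lambda_{\max}(\Theta_0)\le L$, the linear forms $(\Theta^0_i)^T X_\ell$ and $(\Theta^0_j)^T X_\ell$ are sub-Gaussian with parameter controlled by $LK$, hence their product $W_\ell$ is sub-exponential with a norm bounded by a universal constant, which yields a uniform bound on all moments and in particular on $\rho_{ij}$.

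Combining these, the Berry--Esseen bound is $O(n^{-1/2})$ with a constant depending only on $L,K,\omega$ and thus uniform over $\tilde{\mathcal G}(s)$. Finally I would assemble the pieces via Slutsky: the decomposition of Lemma~\ref{rem} gives $\sqrt{n}(\hat T_{ij}-\Theta^0_{ij})/\sigma_{ij}$ equal to the standardized pivot plus $\Delta_{ij}/\sigma_{ij}$, where the latter is $o_{\mathbb{P}}(1)$ uniformly (using again $\sigma_{ij}\ge\omega$). Since the uniform Berry--Esseen bound gives uniform convergence of the distribution function of the pivot to $\Phi$, and a uniformly negligible additive perturbation cannot disturb this uniform convergence, the supremum over $\tilde{\mathcal G}(s)$ of $|\mathbb{P}_{\Theta_0}(\cdot\le z)-\Phi(z)|$ tends to zero, completing the proof. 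I expect the only genuinely delicate point to be making the perturbation argument uniform simultaneously in $z$ and in $\Theta_0$, which one handles by the standard device of absorbing $\Delta_{ij}$ into a shrinking neighborhood of $z$ and invoking uniform continuity of $\Phi$.
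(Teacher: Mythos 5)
Your proposal is correct and follows essentially the same route as the paper's own proof: the decomposition from Lemma~\ref{rem} with the remainder made uniformly negligible by \ref{sparsity.n}, a Berry--Esseen bound for the standardized pivot term with third moments controlled via sub-Gaussianity (the paper invokes Lemma~\ref{bilinear}) and the spectral bound \ref{eig}, and the variance bounded below by \ref{pos.var}. If anything, you are more explicit than the paper about the final step of absorbing the perturbation $\Delta_{ij}$ uniformly in $z$ and $\Theta_0$, which the paper leaves implicit.
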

To construct confidence intervals, 
 a consistent estimator of the asymptotic variance $\sigma_{ij}$ is required. Consistent estimators of $\sigma_{ij}$ are discussed in Section \ref{subsec:est.var} (see  Lemmas \ref{var} and \ref{var.general}).
Hence Theorem \ref{an}  implies uniformly valid asymptotic confidence intervals 
$I_\alpha := [\hat T_{ij}\pm \Phi^{-1}(1-\alpha/2)\hat\sigma_{ij}/\sqrt{n}]$, i.e.
$$\lim_{n\rightarrow \infty }\sup_{\Theta_0  
 \in  \tilde{\mathcal G}  (s)}|
\mathbb P_{\Theta_0}( \Theta^0_{ij}\in I_\alpha) -(1-\alpha)| = 0.$$
The result also enables testing hypotheses about individual elements of the precision matrix. For testing multiple hypothesis simultaneously, we may use the standard procedures such as Bonferroni-Holm procedure (see \cite{vdgeer13}).

\subsection{Variance estimation}
\label{subsec:est.var}
\noindent
For the case of Gaussian observations, we may easily calculate the theoretical variance and plug in the estimate $\hat\Theta$ in place of the unknown $\Theta_0$ as is displayed in Lemma \ref{var} below. 

\begin{lema}\label{var} Suppose that  assumptions  \ref{sparsity} and \ref{subgv} are satisfied and assume that the rows of the design matrix $\mathbf X$ are independent $\mathcal N(0,\Sigma_0)$-distributed. Let $\hat\Theta$ be the nodewise Lasso estimator and let $\lambda_j\geq c\tau \sqrt{\log p/n}$ uniformly in $j$ for some $\tau,c>0.$
Then
for $\hat\sigma^2_{ij}:= \hat\Theta_{ii} \hat\Theta_{jj} + \hat\Theta_{ij}^2$
we have
$$
\sup_{\Theta_0 \in \mathcal G(s)}\mathbb P\left(
\max_{i,j=1,\dots,p}
|\hat\sigma^2_{ij} - \sigma^2_{ij}| \geq C_\tau \sqrt{s\log p/n}\right) \leq c_1 p^{1-\tau c_2},$$
for some constants $C_\tau,c_1,c_2>0.$
\end{lema}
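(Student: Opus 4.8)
The plan is to first reduce the population variance $\sigma_{ij}^2$ to a closed-form expression in the entries of $\Theta_0$ under Gaussianity, thereby showing that $\hat\sigma_{ij}^2$ is the natural plug-in estimator, and then to control the plug-in error by the entrywise convergence rate of the nodewise Lasso.

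First I would compute $\sigma_{ij}^2$ explicitly. Set $U := (\Theta^0_i)^T X_1$ and $V := (\Theta^0_j)^T X_1$; these are jointly mean-zero Gaussian, and since $\Sigma_0\Theta^0_i = e_i$ we obtain $\mathrm{var}(U) = (\Theta^0_i)^T\Sigma_0\Theta^0_i = \Theta^0_{ii}$, $\mathrm{var}(V) = \Theta^0_{jj}$ and $\mathrm{cov}(U,V) = (\Theta^0_i)^T\Sigma_0\Theta^0_j = \Theta^0_{ij}$. Writing $\sigma_{ij}^2 = \mathrm{var}(UV)$ and applying Isserlis' (Wick's) theorem, $\mathbb E[U^2V^2] = \mathbb E[U^2]\mathbb E[V^2] + 2(\mathbb E[UV])^2$, so that $\sigma_{ij}^2 = \Theta^0_{ii}\Theta^0_{jj} + (\Theta^0_{ij})^2$. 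This matches the definition of $\hat\sigma_{ij}^2$ and reduces the problem to bounding $\max_{i,j}\bigl|\hat\Theta_{ii}\hat\Theta_{jj} + \hat\Theta_{ij}^2 - \Theta^0_{ii}\Theta^0_{jj} - (\Theta^0_{ij})^2\bigr|$ uniformly over $\mathcal G(s)$.

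Next I would linearize the plug-in error through the algebraic identities $\hat\Theta_{ii}\hat\Theta_{jj} - \Theta^0_{ii}\Theta^0_{jj} = \hat\Theta_{ii}(\hat\Theta_{jj}-\Theta^0_{jj}) + \Theta^0_{jj}(\hat\Theta_{ii}-\Theta^0_{ii})$ and $\hat\Theta_{ij}^2 - (\Theta^0_{ij})^2 = (\hat\Theta_{ij}-\Theta^0_{ij})(\hat\Theta_{ij}+\Theta^0_{ij})$, and then bound each factor. The ``bounded'' factors are controlled since, within $\mathcal G(s)$, assumption \ref{eig} gives $\|\Theta_0\|_\infty \leq \Lambda_{\max}(\Theta_0) \leq L$. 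The error factors are controlled by the entrywise rate: for each column $j$ one has $\max_i|\hat\Theta_{ij}-\Theta^0_{ij}| \leq \|\hat\Theta_j - \Theta^0_j\|_2$, and Lemma \ref{rates} supplies the bound $\|\hat\Theta_j - \Theta^0_j\|_2 \leq C\sqrt{s\log p/n}$, uniformly in $j$, on an event of probability at least $1 - c_1 p^{1-\tau c_2}$. On this event, assumption \ref{sparsity} forces the radius $r_n := C\sqrt{s\log p/n}$ to satisfy $r_n = o(1)$, so $|\hat\Theta_{ii}|, |\hat\Theta_{ij}| \leq 2L$ for $n$ large; substituting into the two identities bounds each product difference by a constant multiple of $r_n$, whence $\max_{i,j}|\hat\sigma_{ij}^2 - \sigma_{ij}^2| \leq C_\tau\sqrt{s\log p/n}$ on the same event.

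I expect the main work to lie not in the variance identity, which is a short Gaussian moment computation, but in the probabilistic bookkeeping: securing the uniform (over $\mathcal G(s)$) high-probability rate with the \emph{explicit} tail $c_1 p^{1-\tau c_2}$ inherited from Lemma \ref{rates}. The $\tau$-dependence of the exponent traces back to the choice $\lambda_j \geq c\tau\sqrt{\log p/n}$ together with the sub-Gaussian concentration (via \ref{subgv}) of quantities such as $\max_j\|\mathbf X_{-j}^T\eta_j\|_\infty/n$, union-bounded over the $p$ columns. The delicate point is to ensure that the boundedness of the factors $\hat\Theta_{ii}, \hat\Theta_{ij}$ and the rate bound hold on one and the same event, so that no further union bound over the $\mathcal O(p^2)$ index pairs inflates the stated tail.
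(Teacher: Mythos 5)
Your proposal is correct and follows essentially the same route as the paper: the Gaussian fourth-moment (Wick/Isserlis) identity gives $\sigma_{ij}^2=\Theta^0_{ii}\Theta^0_{jj}+(\Theta^0_{ij})^2$, and the plug-in error is then bounded by an elementwise expansion together with the $\ell_2$-rate $\max_j\|\hat\Theta_j-\Theta^0_j\|_2\leq C_\tau\sqrt{s\log p/n}$ from Lemma \ref{rates}, all on the single event $\cap_{j=1}^p\mathcal T_j$ of probability at least $1-c_1p^{1-\tau c_2}$ from Lemma \ref{T2}. The only cosmetic difference is your product linearization keeping $\hat\Theta_{ii}$ as a factor (hence the $o(1)$ boundedness remark), whereas the paper expands fully in terms of $\hat\Delta_{ij}=\hat\Theta_{ij}-\Theta^0_{ij}$ so that only true entries and quadratic error terms appear.
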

\noindent
Lemma \ref{var} implies that under $s=o(\sqrt{n}/\log p)$, we have a rate
$|\hat\sigma^2_{ij} - \sigma^2_{ij}| = o_{\mathbb P}(1/{n}^{1/4}).$
If Gaussianity is not assumed, we may replace the estimator of the variance with the empirical version, and plug in $\hat\Theta$ in place of the unknown $\Theta_0.$ 
Thus we take the following estimator of $\sigma_{ij}^2$, where $\hat\Theta$ is  the nodewise regression estimator
\begin{equation}\label{var.gene}
\hat\sigma_{ij}^2 := \frac{1}{n}\sum_{k=1}^n (\hat\Theta_i^T X_k X_k^T \hat\Theta_j)^2- \hat\Theta_{ij}^2.
\end{equation}
The following lemma justifies this procedure under \ref{eig}, \ref{sparsity.n}, \ref{subgv}.
\begin{lema}\label{var.general}
Suppose that the assumptions \ref{sparsity.n} and \ref{subgv} are satisfied and for some $\epsilon>0$, it holds that 
$\lim_{n\rightarrow \infty}\log^4 (p\vee n) /n^{1-\epsilon} = 0$. 
Let $\hat\Theta$ be the nodewise Lasso estimator and let $\lambda_j\geq c\tau \sqrt{\log p/n}$ uniformly in $j$ for some $\tau,c>0.$
Let $\hat\sigma_{ij}$ be the estimator defined in (\ref{var.gene}). 
Then for all $\eta>0$
$$\lim_{ n \rightarrow \infty}\sup_{\Theta_0 \in \mathcal G(s)}\mathbb P\left( \max_{i,j=1,\dots,p}
| \hat\sigma_{ij}^2 - \sigma_{ij}^2 | \geq \eta\right)=0.$$
\end{lema}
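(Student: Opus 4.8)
The plan is to compare $\hat\sigma_{ij}^2$ with $\sigma_{ij}^2$ term by term after rewriting the population variance in a convenient form. Since $\Sigma_0\Theta_j^0 = e_j$ under \ref{eig}, the cross moment satisfies $\mathbb E[(\Theta_i^0)^TX_1X_1^T\Theta_j^0] = (\Theta_i^0)^T\Sigma_0\Theta_j^0 = \Theta_{ij}^0$, so that
$$\sigma_{ij}^2 = \mathbb E\big[((\Theta_i^0)^TX_1X_1^T\Theta_j^0)^2\big] - (\Theta_{ij}^0)^2.$$
Matching this against the definition (\ref{var.gene}), I would split $\hat\sigma_{ij}^2 - \sigma_{ij}^2$ into three pieces: (I) the correction term $\hat\Theta_{ij}^2 - (\Theta_{ij}^0)^2$; (II) the empirical-process error $\tfrac1n\sum_k((\Theta_i^0)^TX_kX_k^T\Theta_j^0)^2 - \mathbb E[((\Theta_i^0)^TX_1X_1^T\Theta_j^0)^2]$; and (III) the plug-in error $\tfrac1n\sum_k[(\hat\Theta_i^TX_kX_k^T\hat\Theta_j)^2 - ((\Theta_i^0)^TX_kX_k^T\Theta_j^0)^2]$. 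It suffices to show that each piece is $o_{\mathbb P}(1)$ uniformly in $(i,j)$ and uniformly over $\Theta_0\in\mathcal G(s)$. Piece (I) is immediate: writing $\hat\Theta_{ij}^2-(\Theta_{ij}^0)^2 = (\hat\Theta_{ij}-\Theta_{ij}^0)(\hat\Theta_{ij}+\Theta_{ij}^0)$, the supremum-norm consistency $\|\hat\Theta-\Theta_0\|_\infty = o_{\mathbb P}(1)$ from Lemma \ref{rates}, together with the uniform boundedness of the entries of $\Theta_0$ under \ref{eig}, gives $\max_{i,j}|\hat\Theta_{ij}^2-(\Theta_{ij}^0)^2| = o_{\mathbb P}(1)$ uniformly over $\mathcal G(s)$.

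For term (II), I would observe that under \ref{eig} the vectors $\Theta_i^0$ have $\ell_2$-norm at most $L$, so by \ref{subgv} each linear form $(\Theta_i^0)^TX_k$ is sub-Gaussian with a universal constant. Hence $(\Theta_i^0)^TX_kX_k^T\Theta_j^0$ is sub-exponential and its square $Y_k^{(ij)} := ((\Theta_i^0)^TX_kX_k^T\Theta_j^0)^2$ is sub-Weibull of order $1/2$, with tails of order $\exp(-c\sqrt{t})$ uniformly in $(i,j)$. To obtain a uniform law of large numbers I would truncate at a level $M\asymp\log^2(p\vee n)$: the contribution of the event $\{|Y_k^{(ij)}|>M\}$ is controlled by the sub-Weibull tail bound, which at this truncation level is summable even after a union bound over the $p^2$ pairs, while the truncated variables obey a Bernstein inequality whose variance proxy scales with $M$. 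A union bound over the $p^2$ pairs then yields a bound of order $\exp(C\log(p\vee n) - cn\eta^2/M^2)$; since $M^2\asymp\log^4(p\vee n)$, the growth condition $\log^4(p\vee n)/n^{1-\epsilon}\to0$ forces this to vanish. This is the step I expect to be the main obstacle, as it is where the heavy tails of the squared products and the uniformity over all $p^2$ coordinate pairs interact, and where the precise calibration of the growth condition is needed.

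Finally, for the plug-in error (III), I would factor the integrand as a difference of squares and expand around $\Theta_0$ by writing $\hat\Theta_i = \Theta_i^0+\delta_i$ and $\hat\Theta_j = \Theta_j^0+\delta_j$, so that each summand in the expansion contains at least one factor $\delta_i^TX_k$ or $\delta_j^TX_k$, bounded by $\|\delta_i\|_1\|X_k\|_\infty$ via H\"older. Using the $\ell_1$-rate $\max_i\|\delta_i\|_1 = \mathcal O_{\mathbb P}(s\sqrt{\log p/n})$ from Lemma \ref{rates}, the concentration $\max_{k}\|X_k\|_\infty = \mathcal O_{\mathbb P}(\sqrt{\log(np)})$ (maximizing a sub-Gaussian over $np$ entries), and the uniform control of the empirical averages of low-order powers of the sub-Gaussian forms $|X_k^T\Theta_i^0|$ (handled by the same concentration and union-bound machinery as in (II), but for lighter-tailed summands), each term is seen to be of order $s\sqrt{\log p/n}\cdot\mathrm{polylog}(p\vee n)$. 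Under \ref{sparsity.n} this is $o_{\mathbb P}(1)$ uniformly in $(i,j)$ and over $\mathcal G(s)$, and collecting (I)--(III) completes the argument.
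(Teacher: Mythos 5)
Your decomposition coincides exactly with the paper's: your (I) is the paper's term $II$, your (II) is the paper's $\mathrm{rem}_2$, your (III) is the paper's $\mathrm{rem}_1$, and piece (I) is handled identically. The genuine problem is the calibration of step (II). Your final bound $\exp\bigl(C\log(p\vee n)-cn\eta^2/M^2\bigr)$ with $M\asymp\log^2(p\vee n)$ is the \emph{Hoeffding} form, and it does not vanish under the stated growth condition: that condition only gives $\log(p\vee n)=o(n^{(1-\epsilon)/4})$ for \emph{some} $\epsilon>0$, so the Hoeffding exponent would need $\log^5(p\vee n)=o(n)$, which can fail. Concretely, take $\epsilon=0.1$ and $\log(p\vee n)\asymp n^{0.22}$: the growth condition holds, yet $C\log(p\vee n)\asymp n^{0.22}$ dominates $cn\eta^2/M^2\asymp\eta^2 n^{0.12}$, so your bound diverges. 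The repair is to carry through the Bernstein inequality you invoke in the same sentence but then abandon: the truncated summands are nonnegative, bounded by $M$, with variance $O(1)$ (since $\mathbb E[(W_iW_j)^4]=O(1)$ under \ref{eig} and \ref{subgv}, writing $W_i:=(\Theta_i^0)^TX$), so the exponent is of order $n\eta^2/(1+M\eta)\gtrsim n\eta/\log^2(p\vee n)$, and beating the union-bound cost $\log(p\vee n)$ then requires precisely $\log^4(p\vee n)=o(n)$ --- which is exactly what the assumption supplies, and evidently why it is stated with the fourth power. You should also record the truncation bias $\mathbb E[Y\mathbf 1_{Y>M}]=o(1)$, which you pass over. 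For comparison, the paper handles this step by symmetrization with Rademacher variables, truncation at the \emph{polynomial} level $K=n^{(1-\epsilon)/2}$, Hoeffding in expectation for the bounded part (giving $K/\sqrt n=n^{-\epsilon/2}$), and integration of the sub-Weibull tail against a union bound over $p^2n$ terms for the rest.

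Your step (III) takes a genuinely different route from the paper's, and it is the cruder of the two. You bound $|\delta^TX_k|\le\|\delta\|_1\|X_k\|_\infty$, paying a factor $\sqrt{\log(np)}$ each time; the paper instead applies Cauchy--Schwarz to reduce everything to $\max_l\frac1n\sum_k(\hat W_{l,k}-W_{l,k})^4$ and $\max_l\frac1n\sum_kW_{l,k}^4$, and controls the former via the prediction norm $\|\mathbf X(\hat\Theta_l-\Theta_l^0)\|_2^2/n\le\Lambda_{\max}(\Sigma_0)\|\hat\Theta_l-\Theta_l^0\|_2^2+\|\hat\Sigma-\Sigma_0\|_\infty\|\hat\Theta_l-\Theta_l^0\|_1^2=O_{\mathbb P}(s\log p/n)$, using both rates of Lemma \ref{rates} --- no $\|X_k\|_\infty$, no extra logarithms. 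Your route can be made to work, but the blanket claim that anything of order $s\sqrt{\log p/n}\cdot\mathrm{polylog}(p\vee n)$ is $o_{\mathbb P}(1)$ under \ref{sparsity.n} is false in general (already for $p\ge n$ and polylog degree above $1/2$, \ref{sparsity.n} gives no control). What saves you is the specific structure: each $\sqrt{\log(np)}$ arrives paired with one factor $s\sqrt{\log p/n}$, and $s\sqrt{\log p\log(np)/n}=o(1)$ does follow from \ref{sparsity.n} after a short case analysis ($p\ge n$ directly; $p<n$ using $s\le p$). Make that pairing and case analysis explicit, and with the Bernstein correction above your argument is complete.
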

\noindent

\noindent
\subsection{Rates of convergence}\label{subsec:trates}
The de-sparsified estimator achieves optimal rates of convergence in supremum norm. 
Observe first that for the nodewise regression estimator  it holds by (\ref{motiv}), Lemma 1
and Lemma 10 
in the Supplementary material that
$$
\hat \Theta_{ij} - \Theta_{ij}^0 
=\hat\Theta_i^T(\hat\Sigma\hat\Theta_j-e_j)  + \mathcal O_{\mathbb P}\left(\max\left\{\frac{1}{\sqrt{n}}, s{\frac{\log p}{n}}\right\}\right).
$$
By H\"older's inequality and the KKT conditions it follows
$$|\hat\Theta_i^T (\hat\Sigma\hat\Theta_j-e_j)|
\leq \lambda_j\|\hat\Theta_i\|_1/\hat\tau_j^2 
.$$
Consequently, for the rates of convergence of the nodewise Lasso in supremum norm  we find
$$\|\hat \Theta_{} - \Theta_{0} \|_\infty = \mathcal O_\mathbb P\left( \max\left\{\max_{i,j=1,\dots,p}\lambda_j\|\hat\Theta_i\|_1/\hat\tau_j^2 , \frac{1}{\sqrt{n}}, 
s{\frac{\log p}{n}} \right\}\right).$$
De-sparsifying the estimator $\hat\Theta$ as in (\ref{desp}) removes the term involving $\lambda_j\|\hat\Theta_i\|_1/\hat\tau_j^2$ in the above rates.
\begin{theorem}[Rates of convergence]
\label{trates}
Assume that \ref{sparsity} and \ref{subgv} are satisfied. Let $\tau>0$ and let $\hat T$ be the de-sparsified nodewise Lasso estimator with regularization parameters $\lambda_j\geq c\tau\sqrt{\log p/n}$ for some sufficiently large constant $c>0$, uniformly in $j$. Then there exist constants $C_\tau,c_1,c_2>0$ such that
$$\sup_{\Theta_0 \in \mathcal G(s)}
\mathbb P
\left(
|\hat T_{ij}-\Theta^0_{ij}| 
\geq
C_\tau \max\left\{ \frac{1}{\sqrt{n}},   s\frac{\log p}{n}\right\}\right) \leq c_1e^{-c_2\tau}.
$$
and
$$\sup_{\Theta_0 \in \mathcal G(s)}\mathbb P\left( 
\|\hat T_{}-\Theta_{0}\|_\infty 
\geq
C_\tau\max\left\{\sqrt{\frac{\log p}{{n}}}, s\frac{\log p}{n}\right\}\right) \leq c_1p^{1-c_2\tau}.
$$
\end{theorem}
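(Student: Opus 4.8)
The plan is to build directly on the decomposition in Lemma~\ref{rem}. Dividing \eqref{main} by $\sqrt n$ gives, for every $(i,j)$,
\begin{equation*}
\hat T_{ij}-\Theta^0_{ij} = -\,w_{ij} + \frac{\Delta_{ij}}{\sqrt n}, \qquad w_{ij}:=(\Theta^0_i)^T(\hat\Sigma-\Sigma_0)\Theta^0_j,
\end{equation*}
so by the triangle inequality it suffices to control the pivot term $w_{ij}$ and the remainder $\Delta_{ij}/\sqrt n$ separately. Lemma~\ref{rem} already shows that $\max_{i,j}|\Delta_{ij}|$ is of order $s\log p/\sqrt n$ with high probability, hence $\max_{i,j}|\Delta_{ij}|/\sqrt n$ is of order $s\log p/n$; I would invoke the finite-sample tail underlying that lemma so that its failure probability matches the $c_1 e^{-c_2\tau}$ and $c_1 p^{1-c_2\tau}$ targets. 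The whole burden of the proof then reduces to a concentration bound on $w_{ij}$.

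For the pivot term I would first observe that since $\Sigma_0\Theta^0_j=e_j$ we have $(\Theta^0_i)^T\Sigma_0\Theta^0_j=\Theta^0_{ij}$, and because $\hat\Sigma=\tfrac1n\sum_{k=1}^n X_kX_k^T$,
\begin{equation*}
w_{ij}=\frac1n\sum_{k=1}^n U_k,\qquad U_k:=\big((\Theta^0_i)^TX_k\big)\big(X_k^T\Theta^0_j\big)-\Theta^0_{ij},
\end{equation*}
an average of i.i.d.\ mean-zero variables. Each linear form $(\Theta^0_i)^TX_k$ is sub-Gaussian: by \ref{eig} we have $\|\Theta^0_i\|_2\le\Lambda_{\max}(\Theta_0)\le L$, so applying \ref{subgv} to the unit vector $\Theta^0_i/\|\Theta^0_i\|_2$ bounds its sub-Gaussian norm by a constant depending only on $K$ and $L$. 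A product of two sub-Gaussian variables is sub-exponential, and centering preserves this, so the $U_k$ are sub-exponential with parameters bounded uniformly over $\Theta_0\in\mathcal G(s)$ (the constants depend only on $K,L$, never on $p$, $n$, or the sparsity). Bernstein's inequality for sums of i.i.d.\ sub-exponential variables then yields, for $t$ in the sub-Gaussian regime,
\begin{equation*}
\mathbb P\big(|w_{ij}|\ge t\big)\le 2\exp(-c\,n\,t^2),
\end{equation*}
with $c$ depending only on $K,L$.

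The two assertions follow by choosing $t$ appropriately. For the pointwise statement I take $t=C_\tau/\sqrt n$ (no union bound), giving $\mathbb P(|w_{ij}|\ge C_\tau/\sqrt n)\le 2\exp(-cC_\tau^2)$, which is at most $c_1 e^{-c_2\tau}$ once $C_\tau$ is chosen to grow suitably with $\tau$; combined with the $s\log p/n$ bound on the remainder and the triangle inequality this produces the $\max\{1/\sqrt n,\,s\log p/n\}$ rate. For the supremum-norm statement I take $t=C_\tau\sqrt{\log p/n}$ together with a union bound over the $p^2$ pairs $(i,j)$, so that $\mathbb P(\max_{i,j}|w_{ij}|\ge C_\tau\sqrt{\log p/n})\le 2p^2\exp(-cC_\tau^2\log p)$, which is of the form $c_1 p^{1-c_2\tau}$; adding the uniform remainder bound gives the $\max\{\sqrt{\log p/n},\,s\log p/n\}$ rate. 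In both cases the $\max\{\cdot,\cdot\}$ merely records whether the pivot term or the remainder dominates.

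I expect the main obstacle to be establishing the sub-exponential tail of $w_{ij}$ with constants genuinely uniform over $\mathcal G(s)$: one must verify that $\|\Theta^0_i\|_2$ is bounded through \ref{eig}, convert the margin sub-Gaussianity of \ref{subgv} into a sub-exponential bound for the product $U_k$ whose Bernstein constants are free of $p,n,s$, and then track the $\log p$ factor through the union bound carefully enough to land on the stated rate rather than a worse power of $\log p$. A secondary technical point is ensuring the finite-sample tail for the remainder $\Delta_{ij}$ hidden inside Lemma~\ref{rem} is of exactly the polynomial/exponential form needed to match the failure probabilities in the two assertions.
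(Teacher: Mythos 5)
Your proposal is correct and takes essentially the same route as the paper: the identical pivot-plus-remainder decomposition from Lemma~\ref{rem}, the same invocation of its finite-sample tail for $\Delta_{ij}$, and a Bernstein-type sub-exponential concentration bound for the bilinear pivot $(\Theta^0_i)^T(\hat\Sigma-\Sigma_0)\Theta^0_j$. The only cosmetic difference is that the paper obtains the pivot concentration by citing the prepackaged appendix results (Lemma~\ref{prel} for the entrywise bound and Lemma~\ref{conc} for the supremum-norm bound), whereas you re-derive the same inequality inline from sub-Gaussianity of the linear forms and a union bound over the $p^2$ entries.
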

\noindent
We compare  the results of Theorem \ref{trates} with results  on optimal rates of convergence derived for Gaussian graphical models in \cite{zhou}.
Suppose that the observations are Gaussian, i.e. $X_1,\dots,X_n\sim \mathcal N(0,\Sigma_0)$. 
For
 $s\leq C_0 n/\log p$ for some $C_0>0$ and $p \geq s^\nu\text{ for some } \nu >2$
it holds (see \cite{zhou})
$$\inf_{\hat \Theta_{ij}}\sup_{\Theta_0\in \mathcal G(s)}
\mathbb P\left(
|\hat \Theta_{ij}-\Theta^0_{ij}| >
\max\left\{ C_1\frac{1}{\sqrt{n}} , C_2  s\frac{\log p}{n} \right\}\right) > c_1 >0,
$$
and
$$\inf_{\hat \Theta_{}}\sup_{\Theta_0\in \mathcal G(s)}
\mathbb P\left(
\|\hat \Theta_{}-\Theta_{0}\|_\infty > \max\left\{C_1'\sqrt{\frac{\log p}{{n}}}, C_2's\frac{\log p}{n}\right\}\right) > c_2>0,
$$
where $C_1,C_2,C_1',C_2'$ are positive constants depending on $\nu$ and $C_0$ only.
\noindent
As follows from Theorem \ref{trates}, the de-sparsified nodewise Lasso attains the lower bound on rates and thus is in this sense optimal  
(considering the Gaussian setting).

\subsection{Other de-sparsified estimators}
\label{subsec:other.est}
The de-sparsification may work for other estimators of the precision matrix, provided that certain conditions are satisfied. This is formulated in Lemma \ref{other.est} below. A particular example of interest is the square-root nodewise Lasso estimator, that will be discussed below. This estimator has the advantage that it is self-scaling in the variance, similarly as the square-root Lasso \citep{sqrtlasso} on which it is based. 
\begin{lema}\label{other.est}
Assume that for some estimator $\hat\Omega = (\hat\Omega_1,\dots,\hat\Omega_p)$ 
 it holds 
\begin{equation}\label{con1}
\max_{j=1,\dots,p}\|\hat\Omega_j-\Theta^0_j\|_1 =\mathcal O_{\mathbb P}( s\sqrt{\log p/n}),\;
\|\hat\Sigma\hat \Omega  - I\|_\infty =\mathcal O_{\mathbb P}( \sqrt{\log p/n}).
\end{equation}
Then for $\hat T := \hat\Omega + \hat\Omega^T -\hat\Omega^T \hat\Sigma \hat\Omega$ it holds
under \ref{eig}, \ref{sparsity.n}, \ref{subgv}  
$$\|\hat T_{} - \Theta_{0}\|_\infty = \mathcal O_{\mathbb P}(\max\{s\log p/n,\sqrt{\log p/n}\}).$$ 
Moreover, ${\sqrt{n}(\hat T_{ij}-\Theta^0_{ij})}/\sigma_{ij} \rightsquigarrow \mathcal N(0,1).$
\end{lema}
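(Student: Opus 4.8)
The plan is to mirror the proofs of Lemma \ref{rem} and Theorem \ref{an}, exploiting the observation that those arguments use the nodewise Lasso $\hat\Theta$ only through two facts: a row-wise $\ell_1$ error bound and a bound on the residual $\|\hat\Sigma\hat\Theta - I\|_\infty$. Both are granted for $\hat\Omega$ by hypothesis \eqref{con1}, so the same steps carry over with $\hat\Omega$ in place of $\hat\Theta$. Throughout write $\delta_i := \hat\Omega_i - \Theta^0_i$, so that \eqref{con1} reads $\max_i\|\delta_i\|_1 = \mathcal O_{\mathbb P}(s\sqrt{\log p/n})$ and $\|\hat\Sigma\hat\Omega - I\|_\infty = \mathcal O_{\mathbb P}(\sqrt{\log p/n})$.

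First I would establish the algebraic decomposition, exactly as in \eqref{motiv}. Setting $R_j := \hat\Sigma\hat\Omega_j - e_j$ and expanding $\hat T_{ij} = \hat\Omega_{ij} + \hat\Omega_{ji} - \hat\Omega_i^T\hat\Sigma\hat\Omega_j$, the identity $\hat\Omega_i^T\hat\Sigma\hat\Omega_j = \hat\Omega_i^T R_j + \hat\Omega_{ji}$ collapses this to $\hat T_{ij} = \hat\Omega_{ij} - \hat\Omega_i^T R_j$. Subtracting $\Theta^0_{ij}$, substituting $e_i^T = (\Theta^0_i)^T\Sigma_0$ and $e_j = \Sigma_0\Theta^0_j$, and using symmetry of $\Theta_0$, $\hat\Sigma$ and $\Sigma_0$, a direct computation yields
\[
\hat T_{ij} - \Theta^0_{ij} = -(\Theta^0_i)^T(\hat\Sigma - \Sigma_0)\Theta^0_j - \delta_i^T(\hat\Sigma\hat\Omega_j - e_j) - \delta_j^T(\hat\Sigma - \Sigma_0)\Theta^0_i,
\]
where the last identity uses $\hat\Sigma\Theta^0_i - e_i = (\hat\Sigma - \Sigma_0)\Theta^0_i$; the remainder is precisely the $\tilde\Delta_{ij}$ of Lemma \ref{rem}.

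Next I would bound the two remainder pieces by H\"older's inequality. The first obeys $|\delta_i^T(\hat\Sigma\hat\Omega_j - e_j)| \leq \|\delta_i\|_1\,\|\hat\Sigma\hat\Omega - I\|_\infty = \mathcal O_{\mathbb P}(s\log p/n)$ straight from \eqref{con1}. For the second, $|\delta_j^T(\hat\Sigma - \Sigma_0)\Theta^0_i| \leq \|\delta_j\|_1\,\|(\hat\Sigma - \Sigma_0)\Theta^0_i\|_\infty$, and the key is the concentration bound $\max_i\|(\hat\Sigma - \Sigma_0)\Theta^0_i\|_\infty = \mathcal O_{\mathbb P}(\sqrt{\log p/n})$: since $\|\Theta^0_i\|_2 \leq \Lambda_{\max}(\Theta_0) \leq L$ under \ref{eig}, each coordinate is an average of mean-zero sub-exponential variables with uniformly bounded Orlicz norm under \ref{subgv}, so Bernstein with a union bound over the $p^2$ coordinates applies (this is exactly the concentration already proved in Lemma \ref{rem} and the cited supplementary lemmas and may be quoted). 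Hence the second piece is also $\mathcal O_{\mathbb P}(s\log p/n)$, giving $\max_{ij}|\tilde\Delta_{ij}| = \mathcal O_{\mathbb P}(s\log p/n)$. Combining with the same Bernstein argument for the pivot, $\max_{ij}|(\Theta^0_i)^T(\hat\Sigma - \Sigma_0)\Theta^0_j| = \mathcal O_{\mathbb P}(\sqrt{\log p/n})$, yields the stated rate $\|\hat T - \Theta_0\|_\infty = \mathcal O_{\mathbb P}(\max\{s\log p/n,\sqrt{\log p/n}\})$. For normality I fix $(i,j)$, write $\sqrt{n}(\Theta^0_i)^T(\hat\Sigma - \Sigma_0)\Theta^0_j = n^{-1/2}\sum_k W_k$ with $W_k := (\Theta^0_i)^T X_k X_k^T\Theta^0_j - \Theta^0_{ij}$ i.i.d., mean zero, variance $\sigma_{ij}^2$ and bounded sub-exponential parameter, apply the Lindeberg central limit theorem to get $n^{-1/2}\sum_k W_k/\sigma_{ij} \rightsquigarrow \mathcal N(0,1)$, and note that under \ref{sparsity.n} the remainder is $\sqrt{n}\,\tilde\Delta_{ij} = \mathcal O_{\mathbb P}(s\log p/\sqrt{n}) = o_{\mathbb P}(1)$, so Slutsky closes the argument.

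I expect the only genuine subtlety to be the sharpness of the bound $\max_i\|(\hat\Sigma - \Sigma_0)\Theta^0_i\|_\infty = \mathcal O_{\mathbb P}(\sqrt{\log p/n})$. The naive estimate $\|\hat\Sigma - \Sigma_0\|_\infty\,\|\Theta^0_i\|_1$ loses a factor $\sqrt{s}$ and degrades the remainder to $\mathcal O_{\mathbb P}(s^{3/2}\log p/n)$, which is no longer $o_{\mathbb P}(n^{-1/2})$ under \ref{sparsity.n} alone. Avoiding this loss by exploiting $\|\Theta^0_i\|_2 = \mathcal O(1)$ (rather than $\|\Theta^0_i\|_1 = \mathcal O(\sqrt{s})$) is the essential point, and since it reproduces the concentration already used for Lemma \ref{rem}, the remainder of the proof is routine.
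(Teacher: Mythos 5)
Your proposal is correct and follows essentially the same route as the paper: your entrywise decomposition $\hat T_{ij}-\Theta^0_{ij} = -(\Theta^0_i)^T(\hat\Sigma-\Sigma_0)\Theta^0_j - \delta_i^T(\hat\Sigma\hat\Omega_j-e_j) - \delta_j^T(\hat\Sigma-\Sigma_0)\Theta^0_i$ is exactly the paper's matrix identity $\hat T - \Theta_0 = -\Theta_0(\hat\Sigma-\Sigma_0)\Theta_0 - (\Theta_0\hat\Sigma-I)(\hat\Omega-\Theta_0) - (\hat\Omega-\Theta_0)^T(\hat\Sigma\hat\Omega-I)$ read coordinatewise, and both proofs then apply H\"older's inequality with the hypotheses \eqref{con1}, the sub-Gaussian concentration bound $\max_j\|\hat\Sigma\Theta^0_j-e_j\|_\infty = \mathcal O_{\mathbb P}(\sqrt{\log p/n})$, and a CLT-plus-Slutsky argument for the pivot. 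Your closing remark about exploiting $\|\Theta^0_i\|_2=\mathcal O(1)$ rather than $\|\Theta^0_i\|_1=\mathcal O(\sqrt{s})$ correctly identifies the point on which the paper's concentration lemmas (and hence the rate $s\log p/n$ for the remainder) depend.
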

\noindent
We briefly consider nodewise regression with the square-root Lasso as an example. 
The square-root Lasso estimators may be defined via 
$$\hat\gamma_{j} := \text{arg}\min_{\gamma\in\mathbb R^{p-1}} \|X_j- \mathbf X_{-j}\gamma\|_2/n + 2\lambda_0 \|\gamma\|_1,$$
for $j=1,\dots,p.$
Define 
$\hat\tau_j^2 := \|X_j-\mathbf X_{-j}\hat\gamma_j\|_2^2/n$ and $\tilde\tau_j^2 := \hat\tau_j^2 + \lambda_0\hat\tau_j \|\hat\gamma_j\|_1$.
The nodewise square-root Lasso is then given by $\hat\Theta_{j,\text{sqrt}} := \hat\Gamma_j / \tilde\tau_j^2,$
where 
$$\hat{\Gamma}_j := (-\hat\gamma_{j,1},\dots, -\hat\gamma_{j,j-1},1, -\hat\gamma_{j,j+1} ,\dots,-\hat\gamma_{j,p})^T.$$
Note that compared to the nodewise Lasso, the difference lies in estimation of the partial correlations, where we used the square-root Lasso that ``removes the square'' from the squared loss. 
The Karush-Kuhn-Tucker conditions similarly as in Lemma 11
in Supplementary material give
$$\hat\Sigma \hat\Theta_{j,\text{sqrt}} - e_j - \frac{\hat\tau_j}{\tilde\tau_j^2}\lambda_0 \tilde Z_j =0,$$
where $\tilde Z_j := (\tilde \kappa_{j,1},\dots, \tilde \kappa_{j,j-1},1,\tilde\kappa_{j,j+1},\dots,\tilde \kappa_{j,p})^T$
and $\tilde \kappa_{j,i}$ is the sub-diffe\-ren\-tial of the function $\beta\mapsto \|\beta\|_1$ with respect to $\beta_i,$ evaluated at $\hat \gamma_j.$  
The paper \cite{sqrtlasso} further shows that the $\ell_1$-rates for the square-root Lasso satisfy condition \eqref{con1}. 
The de-sparsified estimator may then be defined in the same way as in (\ref{desp}).
Then the conditions of Lemma \ref{other.est} are satisfied and this implies that a de-sparsified nodewise square-root Lasso achieves the same rates as the de-sparsified nodewise Lasso and thus is also rate-optimal.

\subsection{The thresholded estimator and variable selection}
\label{subsec:varsel}
The de-sparsified estimator can be used for variable selection without imposing irrepresentable
conditions.
Under mild conditions, the procedure 
leads to exact recovery of the coefficients that are sufficiently larger in absolute value than the noise level.
The following corollary is implied by Theorem \ref{trates} and Lemmas \ref{var} and \ref{var.general}. 
\noindent
\begin{cor} \label{varsel}
Let $\hat\Theta$ be obtained using the nodewise Lasso and $\hat T$ be defined as in (\ref{desp}) with tuning parameters $\lambda_j \geq c\tau \sqrt{\log p/n}$ uniformly in $j$, for some $c,\tau>0$. Assume that conditions \ref{eig}, \ref{sparsity.n}, \ref{subgv} and \ref{pos.var} are satisfied. 
Let $\hat\sigma_{ij},i,j=1,\dots,p$  be the estimator from Lemma \ref{var.general} and assume that $\log^4 (p\vee n)/n^{1-\epsilon}=o(1)$ for some $\epsilon>0$. Then there exists some constant $C_\tau>0$ such that
$$\lim_{n\rightarrow \infty}
\mathbb P(\max_{i,j=1,\dots,p}|\hat T_{ij} - \Theta_{ij}^0 |/\hat\sigma_{ij} \geq C_\tau 
\sqrt{\log p/n} 
)=0.$$
If, in addition, the rows of $\mathbf X$ are $\mathcal N(0,\Sigma_0)$-dsitributed and $\hat\sigma_{ij},i,j=1,\dots,p$ is instead the estimator from Lemma \ref{var}, then there exist constants $c_1,c_2,C_\tau$ such that 
$$\mathbb P(\max_{i,j=1,\dots,p}|\hat T_{ij} - \Theta_{ij}^0 |/\hat\sigma_{ij} \geq C_\tau 
\sqrt{\log p/n} 
)\leq c_1p^{1-c_2\tau}
.$$

\end{cor}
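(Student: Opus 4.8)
The plan is to bound the numerator by the supremum-norm rate of Theorem~\ref{trates} and to bound the denominator below by combining the positivity assumption~\ref{pos.var} with the uniform consistency of the variance estimators from Lemmas~\ref{var} and~\ref{var.general}. First I would simplify the rate appearing in Theorem~\ref{trates}. Under the strengthened sparsity condition~\ref{sparsity.n}, i.e. $s\log p/\sqrt n=o(1)$, one has $s\log p/n = s\sqrt{\log p/n}\cdot\sqrt{\log p/n}\big/\sqrt{\log p/n}$, and more simply $(s\log p/n)/\sqrt{\log p/n}=s\sqrt{\log p/n}\to 0$, so $s\log p/n=o(\sqrt{\log p/n})$. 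Hence for $n$ large the maximum in Theorem~\ref{trates} is attained by $\sqrt{\log p/n}$. Since~\ref{sparsity.n} implies~\ref{sparsity} and~\ref{subgv} holds, Theorem~\ref{trates} gives, for suitable $C_\tau,c_1,c_2>0$,
$$\sup_{\Theta_0\in\mathcal G(s)}\mathbb P\left(\max_{i,j=1,\dots,p}|\hat T_{ij}-\Theta_{ij}^0|\geq C_\tau\sqrt{\log p/n}\right)\leq c_1 p^{1-c_2\tau}.$$

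Next I would control the denominators from below, uniformly in $(i,j)$. By~\ref{pos.var} we have $\sigma_{ij}^2\geq\omega^2$ for all $i,j$, so it suffices to make $\max_{i,j}|\hat\sigma_{ij}^2-\sigma_{ij}^2|$ small enough that $\min_{i,j}\hat\sigma_{ij}^2\geq\omega^2/2$, which yields $\min_{i,j}\hat\sigma_{ij}\geq\omega/\sqrt 2$. In the general sub-Gaussian case, Lemma~\ref{var.general} gives $\max_{i,j}|\hat\sigma_{ij}^2-\sigma_{ij}^2|=o_{\mathbb P}(1)$ under~\ref{sparsity.n},~\ref{subgv} and the assumed bound on $\log^4(p\vee n)/n^{1-\epsilon}$, so $\{\min_{i,j}\hat\sigma_{ij}\geq\omega/\sqrt 2\}$ has probability tending to one. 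In the Gaussian case, Lemma~\ref{var} gives the sharper $\mathbb P(\max_{i,j}|\hat\sigma_{ij}^2-\sigma_{ij}^2|\geq C_\tau\sqrt{s\log p/n})\leq c_1 p^{1-\tau c_2}$; since~\ref{sparsity} forces $\sqrt{s\log p/n}\to 0$, for $n$ large $C_\tau\sqrt{s\log p/n}<\omega^2/2$, so $\{\min_{i,j}\hat\sigma_{ij}\geq\omega/\sqrt 2\}$ fails with probability at most $c_1 p^{1-\tau c_2}$.

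Finally I would combine the two events. On their intersection,
$$\max_{i,j}\frac{|\hat T_{ij}-\Theta_{ij}^0|}{\hat\sigma_{ij}}\leq\frac{\max_{i,j}|\hat T_{ij}-\Theta_{ij}^0|}{\min_{i,j}\hat\sigma_{ij}}\leq\frac{C_\tau\sqrt{\log p/n}}{\omega/\sqrt 2}=:C_\tau'\sqrt{\log p/n}.$$
A union bound over the complements of the numerator and denominator events then yields, in the general case, an exceptional probability tending to zero (the first display, with $C_\tau'$ renamed $C_\tau$), and in the Gaussian case an exceptional probability of order $p^{1-c_2\tau}$ after adjusting constants (the second display). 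The main obstacle is the \emph{uniform} lower bound on the $p^2$ denominators $\hat\sigma_{ij}$: one must upgrade the pointwise positivity of~\ref{pos.var} into a bound holding simultaneously over all pairs, which is precisely what the uniform-in-$(i,j)$ consistency statements of Lemmas~\ref{var} and~\ref{var.general} supply; once this is in place, and once the dominance of $\sqrt{\log p/n}$ over $s\log p/n$ under~\ref{sparsity.n} is noted, the remaining steps are routine.
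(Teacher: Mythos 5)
Your proposal is correct and follows exactly the route the paper intends: the paper gives no separate proof of Corollary~\ref{varsel}, stating only that it is implied by Theorem~\ref{trates} together with Lemmas~\ref{var} and~\ref{var.general}, and your argument (sup-norm rate for the numerator, where $s\log p/n=o(\sqrt{\log p/n})$ under~\ref{sparsity.n}, plus a uniform lower bound $\min_{i,j}\hat\sigma_{ij}\geq\omega/\sqrt{2}$ from~\ref{pos.var} and the variance lemmas, combined by a union bound) is precisely that derivation made explicit. The only caveat, inherited from the paper's own phrasing, is that the probability bound $c_1p^{1-c_2\tau}$ tends to zero only for $\tau$ (or the product $c\tau$) sufficiently large, so the first display implicitly requires this choice, just as in the paper's proofs of Lemma~\ref{rem} and Theorem~\ref{trates}.
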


\noindent
Corollary \ref{varsel} implies that we may define the re-sparsified estimator
$$\hat T^{\text{thresh}}_{ij}:= \hat T_{ij}\mathbf 1_{|\hat T_{ij}|>C_\tau\hat \sigma_{ij} \sqrt{\log p/n}  },$$
where $\hat\sigma_{ij}$ is defined as in Corollary \ref{varsel}.
Denote $\hat S^{\text{thresh}}:=\{(i,j)\in\mathcal V\times \mathcal V: \hat T^{\text{thresh}}_{ij}\not = 0\}.$
Denote $S_0^{\text{act}} := \{(i,j)\in\mathcal V\times \mathcal V: |\Theta^0_{ij}| \geq 2C_\tau\sigma_{ij} \sqrt{\log p/n}\}.$ 
Then it follows directly from Corollary \ref{varsel} that with high probability
$$S_0^{\text{act}}\subset\hat S^{\text{thresh}}\subset S_0.$$
The inclusion $S_0^{\text{act}}\subset \hat S^{\text{thresh}}$ represents that $\hat T^{\text{thresh}}$ correctly identifies all the non-zero parameters which are above the noise level. The inclusion $\hat S^{\text{thresh}}\subset S_0$ means that there are no false positives. 
If for all $(i,j)\in S_0$ it holds
\begin{equation}\label{uss}
|\Theta_{ij}^0| \geq 2 C_\tau\sigma_{ij} \sqrt{\log p/n},
\end{equation}
then we have exact recovery, i.e. with high probability: 
$\hat S^{\text{thresh}} = S_0.$

\section{Further comparison to previous work}
\label{subsec:comparison}
Closely related is the paper \cite{jvdgeer14}, where asymptotically normal estimation of elements of the concentration matrix is considered based on the graphical Lasso. 
While the analysis follows the same principles, the estimation method used here does not require the irrepresentability condition \citep{ravikumar} that is assumed in \cite{jvdgeer14}. Hence we are able to show that our results hold uniformly over the considered class. 
Furthermore, regarding the computational cost, our method uses Lasso regressions, which can be implemented using fast algorithms as in \cite{lars}.
In comparison, the graphical Lasso method presents a more challenging computational problem, for more details see e.g. \cite{glasso.comp}.
\\
Another related work is the paper \cite{zhou}. This paper suggests an estimator for the precision matrix which is shown to have one dimensional asymptotically normal components with asymptotic variance to $\Theta^0_{ii}\Theta^0_{jj}+(\Theta^0_{ij})^2.$ 
The assumptions and results used in the paper are essentially identical with our assumptions and theoretical results in the present paper.  
However, there are some differences. The paper \cite{zhou} assumes Gaussianity of the underlying distribution, while we only require sub-Gaussianity of the margins. Another difference is in the construction of the estimators. Both approaches use regression to estimate the elements of the precision matrix, but the paper \cite{zhou} concentrates on estimation of the joint distribution of each pair of variables $(X^i,X^j)$ for $i,j=1,\dots,p.$ Thus it is computationally more intensive as it requires $\mathcal O(ps)$ high-dimensional regressions (see \cite{zhou}), while our methodology only requires $\mathcal O(p).$

\section{Simulation results}
\label{subsec:sim}
 In this section we report on the performance of our method on simulated data and provide a comparison to another methodology. 
The random sample $X_1,\dots,X_n$ satisfies $\mathbb E X_i =0, \text{var}(X_i) = \Theta_0^{-1}$,
where the precision matrix  $\Theta_0=\text{five-diag}(\rho_0, \rho_1, \rho_2)$ is defined by 
\[
\Theta^0_{ij} =
\begin{cases}
\rho_0  & \mbox{ if } i=j,\\
\rho_1 & \mbox{ if }|i-j|= 1,\\
\rho_2 & \mbox{ if }|i-j|=2,\\
0 & \mbox{ otherwise. }
\end{cases}
\]
\\
We consider the settings $\mathbf S_1 =(\rho_{0},\rho_{1},\rho_{2}) = (1,0.3,0)$ and $\mathbf S_2 =(\rho_{0},\rho_{1},\rho_{2}) =(1,0.5,0.3)$. The second setting $(1,0.5,0.3)$ is further adjusted by  randomly perturbing each non-zero off-diagonal element of $\Theta_0$ by adding a realization from the uniform distribution on the interval $[-0.05,0.05].$ We denote this new perturbed model by $(1,0.5,0.3)_U.$
Hence the second precision matrix was chosen randomly. 
The sparsity assumption requires $s = o(\sqrt{n}/\log p).$ 
We have chosen the sample sizes for numerical experiments according to the sparsity assumption (for this purpose, we ignored possible constants in the sparsity restriction), i.e. $n \geq s^2\log^2 p$.

\subsection{Asymptotic normality and confidence intervals for individual parameters}
\label{subsec:ae}
\subsubsection{The Gaussian setting}
\label{subsec:gauss}
In this section, we consider normally-distributed observations, $X^i\sim \mathcal N(0,\Theta_0^{-1}),$
for $i=1,\dots,n.$
In Figure 1
 we display histograms of $\sqrt{n}(\hat T_{ij}-\Theta^0_{ij})/\hat\sigma_{ij}$ for  $(i,j)\in\{(1,1),(1,2),$ $(1,3)\},$ where $\hat T$ is defined in (\ref{desp}) and the empirical variance $\hat \sigma_{ij}$ is estimated as suggested by  Lemma \ref{var}. 
Superimposed is the density of $\mathcal N(0,1).$\\
Secondly, we investigate the properties of confidence intervals constructed using the de-sparsified nodewise Lasso. For comparison, we also provide results using confidence intervals based on the de-sparsified graphical Lasso  introduced in \cite{jvdgeer14}.  
 The coverage and length of the confidence interval were
 estimated by their empirical versions, 
$$\hat\alpha_{ij} := \mathbb P_N \mathbf 1_{\{\Theta_{0,ij} \in  I_{ij,\alpha}\}} \text{ and }
\hat \ell_{ij} := \mathbb P_N 2\Phi^{-1}(1-\alpha/2){\hat\sigma_{ij}}/{\sqrt{n}},$$
respectively, using $N=300$ random samples.
For a set $A\subset V\times V$, we define the average coverage over the set $A$ (and analogously average length $\text{avglength}_{A}$) as
$$\text{avgcov}_A := \sum_{(i,j)\in A} \hat\alpha_{ij}/{|A|}.$$
We report average coverages over the sets $S_0$ and $S_0^c$. These are denoted by $\text{avgcov}_{S_0}$ and $\text{avgcov}_{S_0^c},$ respectively. 
Similarly, we calculate average lengths of confidence intervals for each parameter $\Theta^0_{ij}$ from $N=300$ iterations and report $\text{avglength}_{S_0}$ and $\text{avglength}_{S_0^c}$.
\noindent\\
The results of the simulations are shown in Tables 1 and 2.
 The target coverage level is $95\%.$
The methodology for the choice of the tuning parameters was used as follows (see \cite{zhou}), for both methods, 
\begin{equation}\label{tp}
\hat s =\sqrt{n}/\log p, B=\text{qt}(1-\hat s /(2p),n-1), \lambda =B/\sqrt{n-1+B^2},
\end{equation}
where $\text{qt}(\beta,n-1)$ denotes the $\beta$-quantile of a $t$-distribution with $n-1$ degrees of freedom.

\begin{figure}[h!]\label{fig:hist}
\footnotesize
\centering
\textbf{Asymptotic normality in the Gaussian setting}\\
\includegraphics[width=\textwidth]{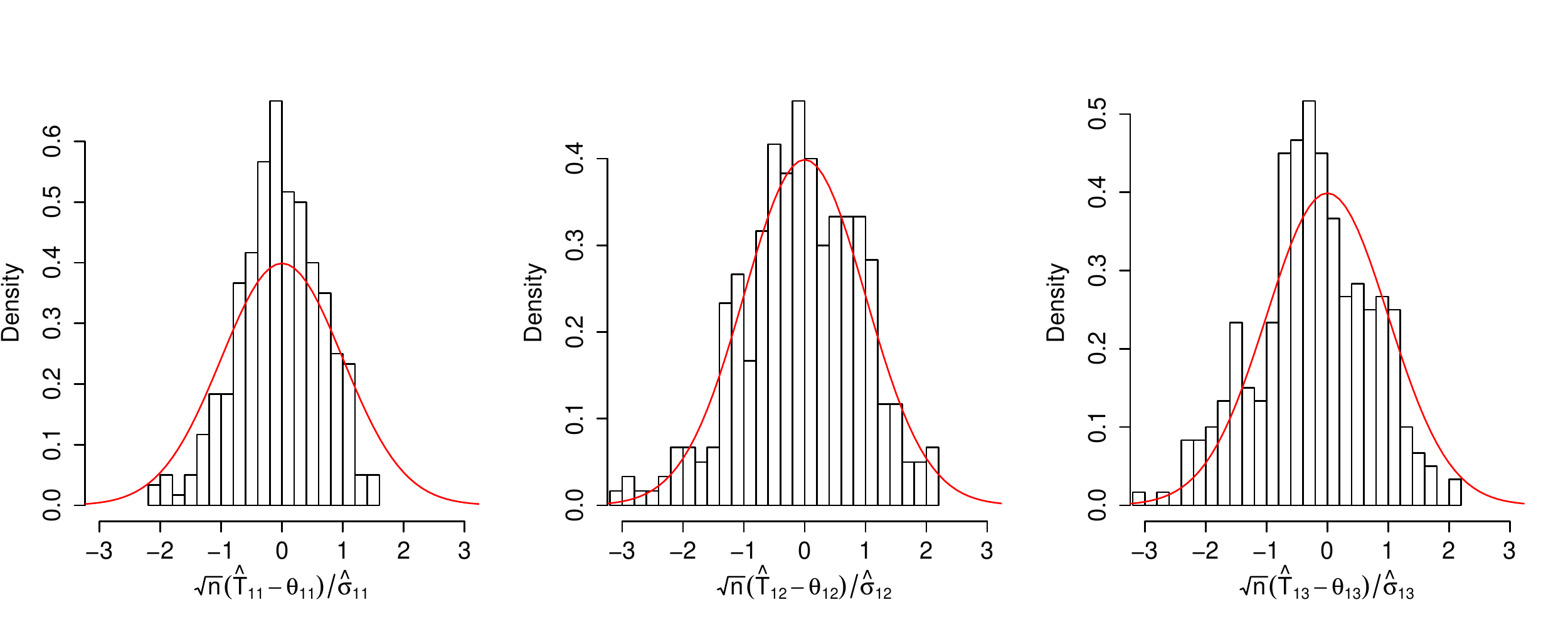}
\caption{\footnotesize
Histograms for $\sqrt{n}(\hat T_{ij}-\Theta^0_{ij})/\hat\sigma_{ij}$, $(i,j)\in\{(1,1),(1,2),(1,3)\}$. The sample size was $n=500$ and the number of parameters $p=100$. The nodewise regression estimator was calculated $300$ times. 
The setting is $\mathbf S_1=(1,0.3,0)$.}
\end{figure}

\begin{table}[h!] \label{fig:cov}
\begin{center}
\footnotesize
\textbf{Gaussian setting: Estimated coverage probabilities and lengths}\par\medskip
\begin{tabular}{cclcccc}\hline
   \multicolumn{3}{c}{\multirow{2}{*}{Setting $\mathbf S_1=(1,0.3,0)$}}        &  &  &  &  \\
   \multicolumn{2}{c}{}     &  & $S_0$ & $S_0$ & $S_0^c$ & $S_0^c$  \\
   $p$& {  $n$ } && avgcov & avglength & avgcov & avglength \\[1ex]
\hline\\[-2ex]
\multirow{2}{*}{$100$} &\multirow{2}{*}{ $191$} 
& D-S NW &  0.945 & 0.302 & 0.963 & 0.262 \\ 
&& D-S GL & 0.931 & 0.293 & 0.974 & 0.254 
\\[0.07cm]\hline \\[-2ex]
\multirow{2}{*}{$200$} &\multirow{2}{*}{ $253$} 
& D-S NW   & 0.947 & 0.267 & 0.963 & 0.232 \\ 
&&  D-S GL & 0.928 & 0.254 & 0.976 & 0.220 
\\[0.07cm]\hline \\[-2ex]
\multirow{2}{*}{$300$} &\multirow{2}{*}{ $293$} 
& D-S NW & 0.949 & 0.238 & 0.965 & 0.220 \\ 
&&  D-S GL  & 0.928 & 0.236 & 0.977 & 0.205 
 \\[0.07cm]\hline\\[-2ex]
\multirow{2}{*}{$400$} &\multirow{2}{*}{ $324$} 
&  D-S NW & 0.948 & 0.246 & 0.965 & 0.230 \\ 
&&  D-S GL  & 0.925 & 0.228 & 0.981 & 0.223 
\\[0.07cm]\hline 
\end{tabular}
\caption{\footnotesize A table showing a comparison of de-sparsified nodewise Lasso (D-S NW) and de-sparsified graphical Lasso (D-S GL).
Parameter $p$ takes values $100,200,300,400$  and the corresponding values $n$ are given by $n=\lceil s^2\log ^2 p\rceil$, where $s=3$. 
The regularization parameter was chosen as described in \eqref{tp}. The number of generated random samples was $N=300.$
}
\end{center}
\end{table}

\begin{table}[h!] \label{fig:cov}
\begin{center}
\footnotesize
\textbf{Gaussian setting: Estimated coverage probabilities and lengths}\par\medskip
\begin{tabular}{cclcccc}\hline
   \multicolumn{3}{c}{\multirow{2}{*}{Setting $\mathbf S_2=(1,0.5,0.3)_U$}}     &     &  &  &  \\
   \multicolumn{2}{c}{}     &  & $S_0$ & $S_0$ & $S_0^c$ & $S_0^c$  \\
   $p$& {  $n$ } && avgcov & avglength & avgcov & avglength \\[1ex]
\hline\\[-2ex]
\multirow{2}{*}{$100$} &\multirow{2}{*}{ $531$} 
& D-S NW & 0.896 & 0.164 & 0.975 & 0.146 \\ 
 && D-S GL & 0.781 & 0.153 & 0.980 & 0.137 
\\[0.07cm]
   \hline\\[-2ex]
\multirow{2}{*}{$200$} &\multirow{2}{*}{ $702$} 
& D-S NW & 0.868 & 0.142 & 0.976 & 0.126 \\ 
&& D-S GL & 0.729 & 0.133 & 0.982 & 0.119 
\\[0.07cm] \hline\\[-2ex]
\multirow{2}{*}{$300$} &\multirow{2}{*}{ $814$} 
& D-S NW & 0.863 & 0.131 & 0.976 & 0.117 \\ 
&& D-S GL & 0.712 & 0.124 & 0.984 & 0.110 
\\[0.07cm]    \hline\\[-2ex]
\multirow{2}{*}{$400$} &\multirow{2}{*}{ $898$} 
& D-S NW & 0.859 & 0.125 & 0.976 & 0.111 \\ 
&& D-S GL & 0.709 & 0.118 & 0.984 & 0.105 
\\[0.07cm]\hline 
\end{tabular}
\caption{\footnotesize 
A table showing a comparison of de-sparsified nodewise Lasso (D-S NW) and the de-sparsified graphical Lasso (D-S GL).
Parameter $p$ takes values $100,200,300,400$  and the corresponding values $n$ are given by $n=\lceil s^2\log ^2 p\rceil$, where $s=5$. 
The regularization parameter was chosen as described in \eqref{tp}. The number of generated random samples was $N=300.$
}
\end{center}
\end{table}

\subsubsection{A sub-Gaussian setting}
\label{subsec:subgauss}
In this section, we consider a design matrix with rows having a sub-Gaussian distribution other than the Gaussian distribution.
Let $U:=(U_1,\dots,U_n)$ be an $n\times p$ matrix with jointly independent entries generated from a continuous uniform distribution on the interval $[-\sqrt{3},\sqrt{3}]$. 
Further consider a matrix $ \Theta_0:= \text{five-diag}(1,0.3,0)$ and let $\Sigma_0=\Theta_0^{-1}.$
Then we define 
$$X_i:= \Sigma_0^{1/2} U_i$$ for 
$i=1,\dots,n.$
 Then the expectation of $X_i$ is zero and the covariance matrix of $X_i$ is exactly  $\Sigma_0$ and the precision matrix is $\Theta_0.$ 
It follows by Hoeffding's inequality that $X_i$ defined as above is sub-Gaussian with a universal constant $K>0.$
\\
A further difference compared to the simulations in Section \ref{subsec:gauss} is that we now estimate the variance of the de-sparsified estimator using the formula proposed in \eqref{var.gene} for sub-Gaussian settings:
\begin{equation}
\hat\sigma_{ij}^2 := \frac{1}{n}\sum_{k=1}^n (\hat\Theta_i^T X_k X_k^T \hat\Theta_j)^2- \hat\Theta_{ij}^2,
\end{equation}
where $\hat \Theta$ is the nodewise Lasso.
The regularization parameters for the nodewise Lasso are used in accordance with \eqref{tp}.
Figure 2 again displays the histograms related to several entries of the de-sparsified nodewise Lasso.
Results related to the constructed confidence intervals are summarized in Table 3.
The results demonstrate that the de-sparsified nodewise Lasso performs relatively well even under this  non-Gaussian setting.

\begin{figure}[h!]\label{fig:hist}
\footnotesize
\centering
\textbf{Asymptotic normality in the sub-Gaussian setting}\\
\includegraphics[width=\textwidth]{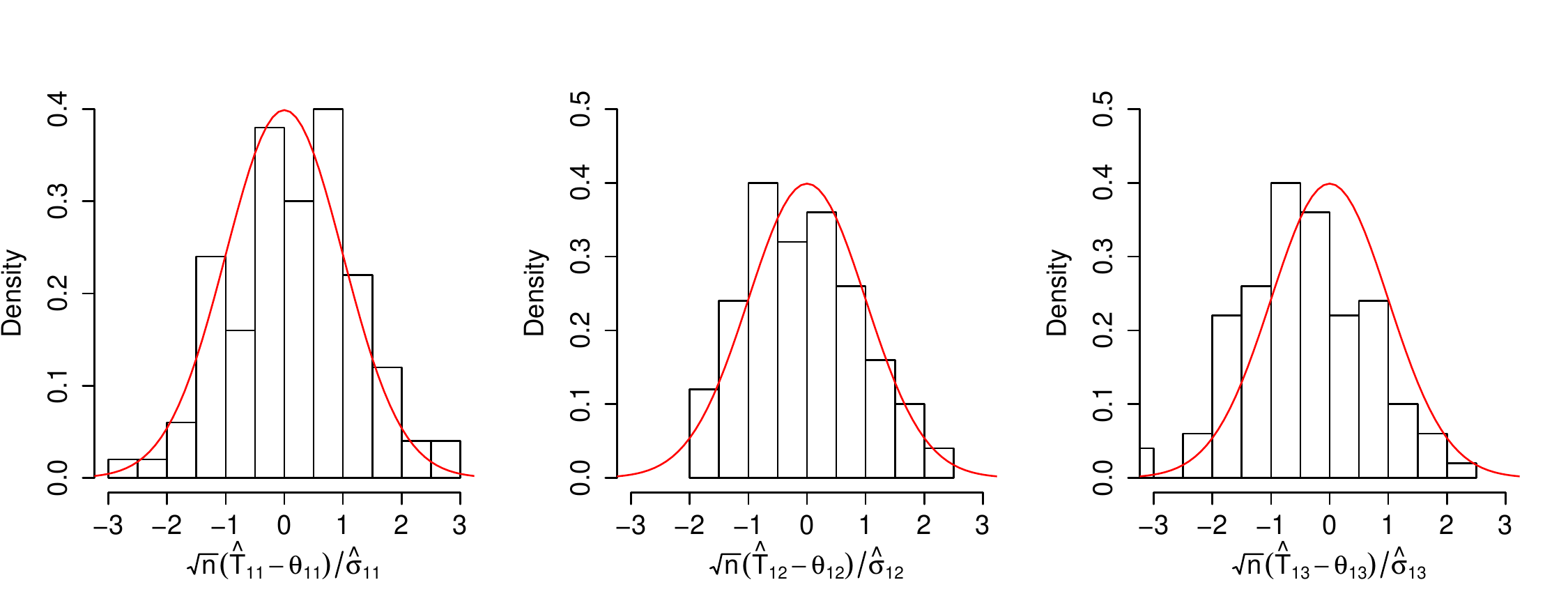}
\caption{\footnotesize
Histograms for $\sqrt{n}(\hat T_{ij}-\Theta^0_{ij})/\hat\sigma_{ij}$, $(i,j)\in\{(1,1),(1,2),(1,3)\}$. The sample size was $n=500$ and the number of parameters $p=100$. The nodewise regression estimator was calculated $300$ times. 
The setting  is $\mathbf S_1=(1,0.3,0)$.}
\end{figure}

\begin{table}[h!] \label{fig:cov_sg}
\begin{center}
\footnotesize
\textbf{Sub-Gaussian setting: Estimated coverage probabilities and lengths}\par\medskip
\begin{tabular}{cclcccc}\hline
   \multicolumn{3}{c}{\multirow{2}{*}{Setting $\mathbf S_1=(1,0.3,0)$}}     &     &  &  &  \\
   \multicolumn{2}{c}{}    &   & $S_0$ & $S_0$ & $S_0^c$ & $S_0^c$  \\
   $p$& {  $n$ } && avgcov & avglength & avgcov & avglength \\[1ex]
\hline\\[-2ex]
\multirow{2}{*}{$100$} &\multirow{2}{*}{ $191$} 
& D-S NW & 0.906 & 0.234 & 0.949 & 0.249 
\\ 
 && D-S GL & 0.811 & 0.190 & 0.944 & 0.216
\\[0.07cm]
   \hline
	\\[-2ex]
\multirow{2}{*}{$200$} &\multirow{2}{*}{ $253$} 
& D-S NW
 & 0.909 & 0.203 & 0.950 & 0.217
 \\ 
&& D-S GL & 0.791 & 0.165 & 0.946 & 0.187
\\[0.07cm]
 \hline
\\[-2ex]
\multirow{2}{*}{$300$} &\multirow{2}{*}{ $293$} 
& D-S NW 
& 0.911 & 0.189 & 0.950 & 0.202
 \\ 
&& D-S GL &0.765 & 0.152 & 0.947 & 0.173
\\[0.07cm]   
 \hline
\\[-2ex]
\multirow{2}{*}{$400$} &\multirow{2}{*}{ $324$} 
& D-S NW 
& 0.911 & 0.180 & 0.951 & 0.192
\\ 
&& D-S GL & 0.740 & 0.143 & 0.947 & 0.164
\\[0.07cm]
\hline 
\end{tabular}
\caption{\footnotesize 
A table showing a comparison of de-sparsified nodewise Lasso (D-S NW) and the de-sparsified graphical Lasso (D-S GL).
Parameter $p$ takes values $100,200,300,400$  and the corresponding values $n$ are given by $n=\lceil s^2\log ^2 p\rceil$, where $s=3$. 
The regularization parameter was chosen as described in \eqref{tp}. The number of generated random samples was $N=300.$
}
\end{center}
\end{table}

\subsection{Variable selection}
\label{subsec:vs}
For variable selection as suggested in Corollary \ref{varsel}, we compare the de-sparsified nodewise Lasso and the de-sparsified graphical Lasso. The setting is again as in Section \ref{subsec:gauss}.
Average true positives and false positives over 100 repetitions are reported. 
Choice of the tuning parameters is according to (\ref{tp}) and the thresholding level is given by
\begin{equation}\label{tl}
\lambda_{\text{thresh}} = \hat\sigma_{ij}\sqrt{2\nu \frac{\log p}{n}},
\end{equation}
taking $\nu=1$ for the de-sparsified nodewise regression, $\nu=0.5$ for the de-sparsified graphical Lasso. We take $\hat\sigma_{ij}=\hat\Theta_{ii}\hat\Theta_{jj}+\hat\Theta_{ij}^2$ as in Lemma \ref{var}.
The results of this simulation experiment are summarized in Table 4.

\begin{table}[ht]
\footnotesize
\centering
\textbf{Estimated true positives (TP) and false positives (FP)}\par\medskip
\begin{tabular}{llcccc}
  \hline     \\[-2ex]
  \multicolumn{2}{c}{{Setting $\mathbf S_1 =(1,0.5,0.4)$}} & TP & TP rate \% & FP & FP rate \% \\[0.1cm] 
 \hline\\[-2ex]
    $p=100$              &  D-S NW  & 494 &  100.0 & 0 & 0 \\ 
   $|S_0|=494$           &  D-S GL & 493.98 & 99.999 & 0 & 0 \\[0.07cm]\hline \\[-2ex]
      $p=200$            &  D-S NW    & 994 & 100.0 & 0 & 0 \\ 
    $|S_0|=994$          &  D-S GL         & 993.62  &  99.961 & 0 & 0 \\[0.07cm]\hline \\[-2ex]
    $p=300$              &  D-S NW         & 1494 & 100.0 & 0 & 0\\ 
$|S_0|=1494$ & D-S GL                      &  1492.42 & 99.894 & 0 & 0 \\[0.07cm]\hline \\[-2ex]
 $p=400$   & D-S NW                       & 1994.00 & 100.0 & 0 & 0 \\ 
$|S_0|=1994$ & D-S GL                     & 1989.08 & 99.753 & 0 & 0 \\[0.07cm]
   \hline
\end{tabular}
\caption{\footnotesize
Estimated true positives (TP) and false positives (FP) for the de-sparsified nodewise regression estimator (D-S NW)
and for the D-S GL estimator. The sample size $n=400$ was held constant for all the values of $p;$ the number of repetitions was $N=100$. The thresholding levels was chosen as in (\ref{tl}). }
\end{table}

\section{Real data experiments}
\label{subsec:real}
We consider two real datasets, where we model the conditional independence structure of the covariates using a graphical model. In particular, we aim to do edge selection and we estimate the edge structure of the graphical model using the de-sparsified nodewise Lasso.
The first dataset is the Prostate Tumor Gene Expression dataset, which is available in the \texttt{R} package \texttt{spls}.
The second dataset is about
riboflavin (vitamin $B_2$) production by bacillus subtilis. 
The dataset is available from the \texttt{R} package \texttt{hdi}.
\\
For both datasets, the procedure is essentially identical. We only consider the first $500$ covariates which have the highest variances. 
In the first step, we split the sample and use 10 randomly chosen observations to estimate the variances of the $500$ variables. With the estimated variances, we scale the design matrix containing the remaining observations. 
We calculate the nodewise Lasso using the tuning parameter as in the simulation study,
and then calculate the de-sparsified nodewise Lasso. We threshold the de-sparsified nodewise Lasso at the  level
$\Phi^{-1}(1-\alpha/(2p^2)) \hat\sigma_{ij}/\sqrt{n},$
 where $\alpha=0.05$ and $\hat\sigma_{ij}^2= \hat\Theta_{ii}\hat\Theta_{ii}+\hat\Theta_{ij}^2$ is an estimate of the asymptotic variance calculated under the assumption of normality and using the nodewise Lasso estimator $\hat\Theta.$
\\
The first dataset contained observations of $p=4088$ logarithms of genes expression levels from $n=71$ genetically engineered mutants of bacillus subtilis. We considered 500 variables with the highest variances, hence a full graph contains $ {500}\choose {2}$ edges. 
The de-sparsified nodewise Lasso  identified $20$ edges as significant. For comparison, the de-sparsified graphical Lasso introduced in \cite{jvdgeer14} identified $5$ edges as significant. It is worth pointing out that the set of edges selected by the de-sparsified graphical Lasso is a subset of the edges selected by de-sparsified nodewise Lasso.\\
The second dataset contained $n=102$ observations on $p=6033$ variables. We used the procedure above to do edge selection using the de-sparsified nodewise Lasso. Our analysis identified $108$ edges as significant using the de-sparsified nodewise Lasso. 
For comparison, the de-sparsified graphical Lasso identified $28$ edges as significant. Again, the set of edges selected by the de-sparsified graphical Lasso is a subset of the edges selected by de-sparsified nodewise Lasso.

\section{Conclusions}
\label{subsec:conclusions}
We proposed a methodology for low-dimensional inference in high-dimensional graphical models. The method, called the de-sparsified nodewise Lasso, is easy to implement and computationally competitive with the state-of-art methods. We studied asymptotic properties of the de-sparsified nodewise Lasso under mild conditions on the model. The de-sparsified nodewise Lasso enjoys rate optimality in supremum norm and leads to exact variable selection under beta-min conditions and mild conditions on the model. We demonstrated its performance on several models in a simulation study and on two real datasets. These numerical studies showed that it performs well in a variety of settings, including non-Gaussian settings. Further open questions concern for instance the asymptotic efficiency of the proposed estimator, similarly as in the low dimensional settings.




\bibliography{gminf}

\newpage
%

\section*{Supplementary material}
We summarize some preliminary material in Section \ref{sec:preli} and proofs of all the results in Section \ref{sec:ma}. In appendices \ref{sec:concentration} and \ref{sec:rates2}, we summarize some known results.






\section{Preliminary results: Rates of convergence of the nodewise Lasso}
\label{sec:preli}
We provide a brief  overview of the results for the nodewise Lasso (derived in \cite{vdgeer13})
in the following two Lemmas. Lemma \ref{T2} below follows from classical concentration results for sub-Gaussian random variables (see \cite{hds}).
The proof of Lemma \ref{rates} below can be found in \cite{vdgeer13}.


\begin{lema}\label{T2}
Suppose that $X_j, j=1,\dots,p$, with $\mathbb EX_j=0$ and $\emph{var}(X_j)=\Sigma_0$ satisfies the eigenvalue condition \ref{eig} and the sub-Gaussianity condition \ref{subgv}. Then there exist constants $c_1,c_2,c>0$
such that for any $\tau$ sufficiently large 
$$\mathbb P(\mathcal T_j^c ) \leq c_1p^{-\tau c_2},$$
where 
\begin{eqnarray*}
\mathcal T_j&:=&\{ 
\|\hat\Sigma\Theta_j^0 - e_j\|_\infty /n \leq c\tau \sqrt{\log p /n}, \\
&& \;\; \|\hat\Sigma-\Sigma_0\|_\infty \leq c\tau\sqrt{\log p/n}, 
\\
&& \;\;|\eta_j^T \eta_j/n -\tau_j^2| \leq c\tau \sqrt{\log p/n}\}.  
\end{eqnarray*}
Moreover, for the set $\cap_{j=1}^p \mathcal T_j$ we get by the union bound
$$\mathbb P((\cap_{j=1}^p \mathcal T_j)^c) = \mathbb P(\cup_{j=1}^p \mathcal T_j^c) \leq p \max_{j=1,\dots,p} \mathbb P(\mathcal T_j^c) \leq 
c_1  p^{1-\tau c_2}.
$$
\end{lema}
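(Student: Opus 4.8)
The plan is to recognize each of the three bounds defining $\mathcal T_j$ as the statement that a sample average of mean-zero, sub-exponential random variables is small, and then to invoke a Bernstein-type concentration inequality for each, closing with two nested union bounds. The starting observation is that under \ref{subgv} each coordinate $X_{ik}$ is sub-Gaussian with a universal Orlicz norm, so that products $X_{ik}X_{il}$ and, more generally, $(\alpha^T X_i)(\beta^T X_i)$ for fixed $\alpha,\beta$ with bounded $\ell_2$-norm are sub-exponential, with $\psi_1$-norm controlled by the product of the corresponding $\psi_2$-norms. Condition \ref{eig} supplies the uniform control needed on the vectors appearing below: $\tau_j^2 = 1/\Theta_{jj}^0 \in [1/L, L]$ and $\|\Theta_j^0\|_2 \le \Lambda_{\max}(\Theta_0) \le L$, both uniformly in $j$ and over the class $\mathcal G(s)$.

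First I would dispose of the second event. Writing $\hat\Sigma_{kl} - \Sigma_{0,kl} = \frac{1}{n}\sum_{i=1}^n (X_{ik}X_{il} - \mathbb E X_{ik}X_{il})$, each entry is an average of $n$ i.i.d. centered sub-exponential variables with uniformly bounded $\psi_1$-norm, so Bernstein's inequality gives $\mathbb P(|\hat\Sigma_{kl}-\Sigma_{0,kl}| \ge t) \le 2\exp(-cn\min\{t^2,t\})$; for $t = c'\tau\sqrt{\log p/n}$ in the Gaussian-like regime this is at most $2p^{-c''\tau}$, and a union bound over the $p^2$ entries controls $\|\hat\Sigma - \Sigma_0\|_\infty$. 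For the first event I would use the identity $\Sigma_0\Theta_j^0 = e_j$ to write $\hat\Sigma\Theta_j^0 - e_j = (\hat\Sigma - \Sigma_0)\Theta_j^0$, whose $k$-th coordinate is $\frac{1}{n}\sum_i (X_{ik}(X_i^T\Theta_j^0) - \mathbb E[X_{ik}(X_i^T\Theta_j^0)])$, again an average of centered sub-exponential variables whose $\psi_1$-norm is bounded via $\|\Theta_j^0\|_2 \le L$; the same Bernstein bound and a union bound over the $p$ coordinates suffice. The third event concerns $\eta_j^T\eta_j/n - \tau_j^2 = \frac{1}{n}\sum_i(\eta_{ij}^2 - \tau_j^2)$, and since $\eta_{ij} = X_i^T\Gamma_j$ is sub-Gaussian the summands $\eta_{ij}^2 - \tau_j^2$ are centered sub-exponential, so Bernstein applies to this single average.

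Collecting the three pieces, I would take a union bound: the defining bounds of $\mathcal T_j$ fail with probability at most the sum of $p^2 + p + 1$ tail probabilities, each of the form $2p^{-c''\tau}$; absorbing the polynomial prefactor into the exponent by enlarging $c$ (equivalently, requiring $\tau$ sufficiently large) yields $\mathbb P(\mathcal T_j^c) \le c_1 p^{-\tau c_2}$, and the final displayed union bound over $j=1,\dots,p$ produces the factor $p^{1-\tau c_2}$ as stated. The main obstacle is not the mechanics of Bernstein's inequality but ensuring that every Orlicz norm entering these bounds is dominated by a single universal constant — uniformly in $i,j,k,l$, in $n$ and $p$, and crucially over the whole class $\mathcal G(s)$ — so that $c_1,c_2,c$ genuinely do not depend on the unknown $\Theta_0$. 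This is exactly where \ref{eig} and \ref{subgv} are used, and where care is needed to keep the sub-exponential norm of products such as $(\alpha^T X_i)(\beta^T X_i)$ bounded by $\|\alpha\|_2\|\beta\|_2$ times a universal constant.
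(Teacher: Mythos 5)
Your proposal is correct and takes essentially the same route as the paper: the paper gives no self-contained proof of this lemma, deferring to classical concentration results for sub-Gaussian variables, and its Appendix lemmas (the moment bound and Bernstein-type tail bounds for bilinear forms $\alpha^T(\hat\Sigma-\Sigma_0)\beta$ with $\|\alpha\|_2,\|\beta\|_2$ bounded) are precisely the tools you reconstruct. Your argument --- uniform $\psi_1$-control of products of sub-Gaussian variables via \ref{eig} and \ref{subgv}, a Bernstein-type inequality for each of the three events (using $\hat\Sigma\Theta_j^0-e_j=(\hat\Sigma-\Sigma_0)\Theta_j^0$ and $\eta_j^T\eta_j/n-\tau_j^2$ as an average of centered sub-exponential variables), and union bounds with polynomial prefactors absorbed by taking $\tau$ sufficiently large --- is exactly the intended proof.
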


\noindent
\begin{lema}[a version of Theorem 2.4 in \cite{vdgeer13}] \label{rates}
Let $\tau>0$ and suppose that $\hat\Theta$ is the nodewise Lasso estimator (\ref{nrdef}) with regularization parameters $\lambda_j\geq 
 c\tau\sqrt{\frac{\log p}{n}}$ uniformly in $j$, for some sufficiently large constant $c>0$. 
Suppose that \ref{eig}, \ref{sparsity}, \ref{subgv} are satisfied. 
Then there exists a constant $C_\tau>0$ such
that on the set $\mathcal T_j$ defined in Lemma \ref{T2} it holds
$$\|\hat\Theta_j-\Theta_j^0\|_1 \leq  C_\tau s \sqrt{{\log p}/{n}}, \;|\hat\tau_j^2 - \tau_j^2| \leq  C_\tau \sqrt{{s\log p}/{n}},$$
$$\|\hat\Theta_j-\Theta_j^0\|_2^2\leq  C_\tau s {{\log p}/{n}}
.$$
Furthermore, on the set $\cap_{j=1}^p \mathcal T_j$ we have 
$$\max_{j=1,\dots,p}\|\hat\Theta_j - \Theta_j^0\|_1 \leq C_\tau s\sqrt{\log p/n}, \quad\quad\max_{j=1,\dots,p}|\hat\tau_j^2 - \tau_j^2| \leq  C_\tau \sqrt{
{s\log p}/{n}},$$
$$\max_{j=1,\dots,p}\|\hat\Theta_j-\Theta_j^0\|_2^2\leq  C_\tau s {{\log p}/{n}}.$$
\end{lema}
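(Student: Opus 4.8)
The plan is to treat this as a deterministic Lasso oracle-inequality argument applied to each of the $p$ nodewise regressions \eqref{NR}, carried out entirely on the event $\mathcal T_j$ so that all probabilistic content is already supplied by Lemma \ref{T2}. The starting point is the decomposition $X_j = \mathbf X_{-j}\gamma_j + \eta_j$, where $\mathbf X_{-j}^T\eta_j$ has mean zero by the definition of the population projection \eqref{true}. Using $\Theta_j^0 = \Gamma_j/\tau_j^2$ together with $\mathbf X\Gamma_j = \eta_j$, the off-diagonal entries of $\hat\Sigma\Theta_j^0 - e_j$ are exactly $(X_k^T\eta_j/n)/\tau_j^2$ for $k\neq j$. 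Hence the event $\mathcal T_j$ directly controls the empirical noise term, $\|\mathbf X_{-j}^T\eta_j/n\|_\infty \leq \tau_j^2\,\|\hat\Sigma\Theta_j^0-e_j\|_\infty \lesssim \sqrt{\log p/n}$, using the bound $\tau_j^2 = 1/\Theta_{jj}^0 \leq L$ from \ref{eig}. Consequently, for $c$ sufficiently large the choice $\lambda_j \geq c\tau\sqrt{\log p/n}$ satisfies $\lambda_j \geq 2\|\mathbf X_{-j}^T\eta_j/n\|_\infty$, the standard requirement that the tuning parameter dominate the noise.

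With this in hand I would write the basic inequality for \eqref{NR}, comparing $\hat\gamma_j$ to $\gamma_j$, to obtain $\|\mathbf X_{-j}(\hat\gamma_j-\gamma_j)\|_2^2/n + \lambda_j\|\hat\gamma_j-\gamma_j\|_1 \leq 4\lambda_j\|(\hat\gamma_j-\gamma_j)_{S_j}\|_1$, where $S_j$ is the support of $\gamma_j$ with $|S_j| = s_j \leq s$. This confines the error to the usual cone, so I can invoke the compatibility condition for the empirical Gram matrix $\hat\Sigma_{-j,-j}$. The key point is that this condition holds on $\mathcal T_j$: the population version follows from \ref{eig}, and it transfers to $\hat\Sigma_{-j,-j}$ because $\|\hat\Sigma-\Sigma_0\|_\infty = O(\sqrt{\log p/n})$ there while $s\log p/n = o(1)$ by \ref{sparsity}, which is a standard perturbation result (e.g.\ Corollary 6.8 in \cite{hds}). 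Feeding the compatibility constant into the cone inequality yields the prediction bound $\|\mathbf X_{-j}(\hat\gamma_j-\gamma_j)\|_2^2/n \lesssim s\lambda_j^2$ and the $\ell_1$-bound $\|\hat\gamma_j-\gamma_j\|_1 \lesssim s\lambda_j$; the same restricted-eigenvalue constant converts the prediction bound into $\|\hat\gamma_j-\gamma_j\|_2^2 \lesssim s\log p/n$. With $\lambda_j\asymp\sqrt{\log p/n}$ these are exactly the stated $\ell_1$- and $\ell_2$-rates for the $\gamma$-error.

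For the noise level I would expand $\hat\tau_j^2 = \|\eta_j-\mathbf X_{-j}(\hat\gamma_j-\gamma_j)\|_2^2/n + \lambda_j\|\hat\gamma_j\|_1$ into $\|\eta_j\|_2^2/n$, a cross term $-2\eta_j^T\mathbf X_{-j}(\hat\gamma_j-\gamma_j)/n$, the prediction term, and the penalty term. On $\mathcal T_j$ the first concentrates at $\tau_j^2$ at rate $\sqrt{\log p/n}$; the cross term and prediction term are both $O(s\log p/n)$ by Hölder and the bounds just derived; and the penalty term obeys $\lambda_j\|\hat\gamma_j\|_1 \lesssim \sqrt{\log p/n}\,(\sqrt{s}+s\sqrt{\log p/n})$, where $\|\gamma_j\|_1 \leq \sqrt{s}\,\|\gamma_j\|_2 = O(\sqrt{s})$ again by \ref{eig}. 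The dominant contribution is $\sqrt{s\log p/n}$, giving $|\hat\tau_j^2-\tau_j^2| \lesssim \sqrt{s\log p/n}$. The bounds on $\hat\Theta_j = \hat\Gamma_j/\hat\tau_j^2$ then follow by splitting $\|\hat\Theta_j-\Theta_j^0\|_1 \leq \|\hat\Gamma_j-\Gamma_j\|_1/\hat\tau_j^2 + \|\Gamma_j\|_1\,|1/\hat\tau_j^2 - 1/\tau_j^2|$ and using $\tau_j^2 \geq 1/L$ (so $\hat\tau_j^2$ is bounded away from zero), which makes both pieces $O(s\sqrt{\log p/n})$; the $\ell_2$-bound is analogous. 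Finally, the uniform-in-$j$ statements follow verbatim on $\cap_{j=1}^p\mathcal T_j$, since Lemma \ref{T2} gives $\mathbb P((\cap_{j=1}^p\mathcal T_j)^c)\leq c_1 p^{1-\tau c_2}$ and all bounds hold there with a common $C_\tau$. The main obstacle I anticipate is the bookkeeping for the noise level: one must correctly identify that the penalty term $\lambda_j\|\hat\gamma_j\|_1$, rather than the prediction error, dictates the $\sqrt{s\log p/n}$ rate, and carefully verify that the transfer of the compatibility condition to $\hat\Sigma_{-j,-j}$ is legitimate under \ref{sparsity} alone.
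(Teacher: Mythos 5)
Your overall skeleton — basic inequality, cone membership, compatibility for $\hat\Sigma_{-j,-j}$, the bookkeeping showing the penalty term $\lambda_j\|\hat\gamma_j\|_1$ drives the $\sqrt{s\log p/n}$ rate for $\hat\tau_j^2$, and the division by $\hat\tau_j^2$ to pass from $\hat\Gamma_j$ to $\hat\Theta_j$ — is exactly the standard argument behind Theorem 2.4 of \cite{vdgeer13}, which is all the paper itself offers (it cites that theorem and omits the proof). However, there is one genuine gap, precisely at the step you flagged for verification: the transfer of the compatibility condition to $\hat\Sigma_{-j,-j}$. The elementwise perturbation result you invoke (Corollary 6.8 / Lemma 6.17 in \cite{hds}) requires $\|\hat\Sigma-\Sigma_0\|_\infty \lesssim \phi_0^2/s$, i.e.\ $s\sqrt{\log p/n}=O(1)$, because for a cone vector $v$ one can only bound $|v^T(\hat\Sigma-\Sigma_0)v|\leq \|\hat\Sigma-\Sigma_0\|_\infty\|v\|_1^2 \lesssim s\,\|\hat\Sigma-\Sigma_0\|_\infty\|v\|_2^2$. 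Assumption \ref{sparsity} only gives $s\log p/n=o(1)$, which does not imply $s\sqrt{\log p/n}=O(1)$: take $s=\sqrt n$ and $\log p=n^{1/4}$, so that $s\log p/n=n^{-1/4}\to 0$ while $s\sqrt{\log p/n}=n^{1/8}\to\infty$. So on $\mathcal T_j$ alone, with only the elementwise bound $\|\hat\Sigma-\Sigma_0\|_\infty\lesssim\sqrt{\log p/n}$ in hand, your compatibility (and a fortiori restricted-eigenvalue, which you also need for the $\ell_2$ bound) transfer fails under \ref{sparsity}.

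The correct route under \ref{sparsity} — and the one effectively used in \cite{vdgeer13}, to which the paper defers — is to use the sub-Gaussianity assumption \ref{subgv} to establish the compatibility/restricted-eigenvalue property of the empirical Gram matrix \emph{directly} with high probability whenever $s\log p/n=o(1)$, via concentration of sparse quadratic forms for sub-Gaussian designs (Rudelson--Zhou-type transfer principles), rather than deducing it from the elementwise bound. This is exactly what the paper alludes to when it states, after \ref{subgv}, that the sub-Gaussianity condition ``is also used to ensure that the compatibility condition is satisfied for $\hat\Sigma$ with high probability.'' Two remarks: first, this fix changes the probabilistic bookkeeping slightly, since compatibility then holds on an auxiliary high-probability event rather than being a deterministic consequence of $\mathcal T_j$; second, your argument as written would be perfectly fine under the strengthened condition \ref{sparsity.n}, since $s\sqrt{\log p/n}\leq s\log p/\sqrt{n}=o(1)$ there — but Lemma \ref{rates} is stated, and later used (e.g.\ in Theorem \ref{trates} and Lemma \ref{var}), under \ref{sparsity} alone, so the weaker hypothesis must be respected. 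Everything else in your proposal (noise bound via $\mathcal T_j$, the cone inequality, the $\hat\tau_j^2$ expansion, $\|\gamma_j\|_1=O(\sqrt s)$ from \ref{eig}, and the uniformity over $j$ on $\cap_{j=1}^p\mathcal T_j$) is correct.
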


\section{Proofs for Section \ref{subsec:main}}

\label{sec:ma}

\begin{proof}[Lemma \ref{rem}]
We first derive a bound for $|\Delta_{ij}|$ (which will be useful later) and then a bound for $\|\Delta\|_\infty.$\\
Let $\tau>0$ and 
consider the set $\mathcal T_j$ defined in Lemma \ref{T2}.
Note that by Lemma \ref{T2}  (and since $\|\Theta_i^0\|_2 \leq L,\|\Theta_j^0\|_2 \leq L $) we have 
$\mathbb P(\mathcal T_i \cap \mathcal T_j ) \leq c_1 p^{-c_2\tau}$ for some constants $c_1,c_2>0.$
We now condition on the set $\mathcal T_i \cap \mathcal T_j $.
\\
By the definition of $\hat T,$ we have
\begin{eqnarray*}
\hat T_{ij} - \Theta_{ij}^0 &=& 
\hat\Theta_{ij}-\Theta_{ij}^0  - \hat\Theta_i^T\lambda_j\hat Z_j    \\
& = &  
- ({\Theta_i^0})^T(\hat\Sigma-\Sigma^0)\Theta_j^0 
\\&&
+ \underbrace{(\hat\Sigma\Theta_i^0 - e_i)^T(\Theta_j^0 -\hat\Theta_j)}_{\text{rem}_1}   
+ \underbrace{(\Theta_i^0-\hat\Theta_i)^T(\hat\Sigma\hat\Theta_j - e_j)}_{\text{rem}_2}.
\end{eqnarray*}
For the first remainder, we obtain
$$|\text{rem}_1| = |(\hat\Sigma\Theta_i^0 - e_i)^T(\Theta_j^0 -\hat\Theta_j)| 
\leq 
\|\hat\Sigma\Theta_i^0 - e_i\|_\infty \|\Theta_j^0 -\hat\Theta_j\|_1.
$$
Under \ref{eig} and \ref{subgv}, by Lemma \ref{conc} in Appendix \ref{sec:concentration},  we have 
$$\max_{j=1,\dots,p}\|\hat \Sigma \Theta_j^0 - e_j\|_\infty =\mathcal O_{\mathbb P}(\sqrt{\log p/n}).$$
Consequently, and using $\ell_1$-rates of convergence from Lemma \ref{rates}, 
we obtain for some constant $C_\tau>0$
$$|\text{rem}_1| \leq  C_\tau s{\log p}/{n}.$$
By Lemma \ref{rates}, we have $|\hat\tau_j^2 - \tau_j^2| \leq  C_\tau\sqrt{\frac{s\log p}{n}}$ and 
since $1/\tau_j^2=\mathcal O(1)$, we have 
$1/\hat\tau_j^2 =\mathcal O(C_\tau) $.
Hence for the second remainder, by the KKT conditions and Lemma \ref{rates}, we obtain
\begin{eqnarray*}
|\text{rem}_2| &=& |(\Theta^0_i-\hat\Theta_i)^T(\hat\Sigma\hat\Theta_j - e_j)|\\
& \leq & 
\|\hat\Sigma\hat\Theta_j - e_j\|_\infty \|\Theta^0_i-\hat\Theta_i\|_1
\\
& \leq &
\lambda_j/\hat\tau_j^2  (1+\|\hat\kappa_j\|_1) s\sqrt{\log p/n} =\mathcal O( C_\tau s \log p/n).
\end{eqnarray*}
Therefore, for $\tilde\Delta_{ij}:= \text{rem}_1 +\text{rem}_2,$ 
we have $ |\tilde\Delta_{ij}|= \mathcal O_{}( s{\log p}/{n})$ on the set $\mathcal T_i \cap \mathcal T_j$. 
\\
\\
Now condition on the set $\cap_{j=1}^p \mathcal T_j.$ By Lemma \ref{T2}, we have for some constants $c_1,c_2>0$ it holds
$$\mathbb P((\cap_{j=1}^p \mathcal T_j )^c) \leq c_1p^{1-c_2\tau}.$$
We again have the decomposition
\begin{equation}\label{hlavna}
\hat T - \Theta_0  = 
- \Theta_0(\hat\Sigma-\Sigma_0) \Theta_0
+ \underbrace{
(\Theta_0\hat\Sigma -I)(\Theta_0- \hat\Theta)
+ 
(\Theta_0-\hat\Theta)^T(\hat\Sigma \hat\Theta -I)}_{
\tilde\Delta
}.
\end{equation}
But then we obtain
\begin{eqnarray*}
\|\tilde\Delta\|_\infty & \leq & 
\|(\Theta_0\hat\Sigma -I)(\Theta_0- \hat\Theta)\|_\infty +
\|(\Theta_0-\hat\Theta)^T(\hat\Sigma \hat\Theta -I)\|_\infty
\\
&\leq &
 \|\Theta_0\hat\Sigma -I\|_\infty \vertiii{\Theta_0- \hat\Theta}_1 +
\vertiii{\Theta_0-\hat\Theta}_1 \|\hat\Sigma \hat\Theta -I\|_\infty
\\
&\leq & 
\max_{j=1,\dots,p} \|\hat\Sigma\Theta_j^0 -e_j\|_\infty \vertiii{\Theta_0- \hat\Theta}_1 \\
&&+
\vertiii{\Theta_0- \hat\Theta}_1 \max_{j=1,\dots,p} \lambda_j (1+\|\hat\kappa_j\|_1)/\hat\tau_j^2.
\end{eqnarray*}
By Lemma \ref{rates}, we have $\vertiii{\Theta_0- \hat\Theta}_1 =\mathcal O(C_\tau s\sqrt{\log p/n})$ and $\max_{j=1,\dots,p}|\hat\tau_j^2 -\tau_j^2|=\mathcal O(C_\tau \max_{j=1,\dots,p}\sqrt{s}\lambda_j)$. Hence, and by the last display 
 we get 
 \begin{eqnarray*}
\|\Delta\|_\infty =
\sqrt{n}\|\tilde\Delta\|_\infty  = \mathcal O(C_\tau s\log p/\sqrt{n}). 
\end{eqnarray*}
Therefore for some constant $C_\tau>0$ we obtain 
$$\mathbb P(\|\Delta\|_\infty   \geq C_\tau s\log p/\sqrt{n} ) \leq \mathbb P((\cap_{j=1}^p \mathcal T_j )^c) \leq c_1p^{1-c_2\tau}.$$
Now note that the constants $c_1,c_2$ are universal, therefore we can take the supremum over $\mathcal G(s)$ to obtain
$$\sup_{\Theta_0\in\mathcal G(s)}\mathbb P(\|\Delta\|_\infty   \geq C_\tau s\log p/\sqrt{n} ) \leq  c_1p^{1-c_2\tau}.$$
If we choose $\tau>0$ sufficiently large so that $1-c_2\tau <0$ then 
$$\lim_{n\rightarrow \infty}\sup_{\Theta_0\in\mathcal G(s)}\mathbb P(\|\Delta\|_\infty   \geq C_\tau  s\log p/\sqrt{n} ) \leq \lim_{n\rightarrow \infty} c_1p^{1-c_2\tau} = 0.$$

\end{proof}


\begin{proof}[Theorem \ref{an}]
By (\ref{main}), it holds
$$\sqrt{n}(\hat T_{ij}-\Theta^0_{ij})/\sigma_{ij} = -\tilde Z_{}/\sigma_{ij} + \text{rem}/\sigma_{ij},$$
where 
$$\tilde Z_{} := \sqrt{n}{(\Theta^0_{i})^T (\hat\Sigma-\Sigma_0)\Theta^0_{j}}
=:
\sqrt{n}\sum_{k=1}^n  \frac{1}{n}((\Theta^0_{i})^T X_{k}X_k^T\Theta^0_{j}-\Theta^0_{ij}),$$ 
and
$\text{rem} := \sqrt{n} (\hat\Sigma\Theta_i^0 - e_i)^T ( \Theta_j^0 - \hat\Theta_j^0) + 
\sqrt{n}(\Theta_i^0 - \hat\Theta_i^0 )^T (\hat\Sigma\hat\Theta_j^0 - e_j).$
\\
First observe that by Lemma \ref{rem},
$|\text{rem}| =\mathcal O_{\mathbb P}\left( s\frac{\log p}{\sqrt{n}}\right) = o_{\mathbb P}(1).$
Note that under \ref{subgv}, the results of Lemma \ref{rem} are uniform in $\Theta_0.$ 
\vskip 0.2cm
\noindent
Denote $Z_{k}:= (\Theta^0_{i})^T X_{k}X_k^T\Theta^0_{j}-\Theta^0_{ij}.$
To show $\tilde Z_{}/\sigma_{ij}\rightsquigarrow \mathcal N(0,1)$, we apply Berry-Esseen theorem (see e.g. \cite{durrett}).
By Lemma \ref{bilinear} in Appendix \ref{sec:concentration} (note that $\|\Theta_i^0\|_2\leq L,\|\Theta_j^0\|_2\leq L$), we have
a moment bound 
for $m\geq 2,$
$$\mathbb E|Z_{k}|^m/(2\sigma_0 L^2K)^{m} 
\leq 
\frac{m!}{2}  (K/\sigma_0)^{m-2},
$$
for some constant $\sigma_0>0.$
Under \ref{eig}, for every $m$ fixed we have $\sup_{\tilde{\mathcal G}}\mathbb E|Z_{k}|^m=\mathcal O(1)$. 
We have $\sigma^2_{ij}=\mathbb E(Z_{1}^2) \geq \omega^2$ for some universal constant $\omega>0$. 
Therefore,
$$|\mathbb P_{\Theta_0}(\tilde Z_{}/\sigma_{ij}<z) - \Phi(z)| \leq \frac{3\mathbb E|Z_{1}|^3}{(\mathbb E|Z_{1}|^2)^{3/2}\sqrt{n}} \leq \frac{C}{\sqrt{n}},$$
where $C$ does not depend on $\Theta_0$ and $n$,
which concludes the proof.
\vskip 0.2cm
\noindent
\end{proof}

\subsection{Proofs for Section \ref{subsec:est.var}}

\begin{proof}[Lemma \ref{var}]
Since $X_1\sim \mathcal N(0,\Sigma_0),$ then $Z:={\Theta_0} X\sim \mathcal N(0,\Theta_0)$. It is well known that  
$\mathbb E(Z_i^2Z_j^2) = \Theta^0_{ii}\Theta^0_{jj} + 2 (\Theta^0_{ij})^2.$
Consequently, we get
$$\sigma_{ij}^2 = \text{var}(({\Theta^0_{i}})^T X_1{(\Theta^0_{j})}^T X_1) 
=\text{var}(({\Theta_0} X_1)_i({\Theta_0} X_1)_j)
= \Theta^0_{ii}\Theta^0_{jj}+{(\Theta^0_{ij})^2}.$$
We next use the results of Lemmas \ref{T2} and \ref{rates}. The set $\cap_{i=1}^p\mathcal T_j$ from Lemma \ref{T2} holds with probability at least $1-c_1p^{1-\tau c_2}$, for some constants $c_1,c_2>0$. 
We define $\hat \Delta_{ij}:=\hat\Theta_{ij}-\Theta_{ij}^0.$
Conditioning on the set $\cap_{i=1}^p\mathcal T_j$  we thus get
(using Lemma \ref{rates} (Euclidean norm bound) and condition \ref{eig})
\begin{eqnarray}\nonumber
\max_{i,j=1,\dots,p}|\hat\sigma_{ij}^2 - \sigma_{ij}^2|
& \leq &
\max_{i,j=1,\dots,p}|\hat\Theta_{ii}\hat\Theta_{jj}- \Theta^0_{ii}\Theta^0_{jj}| + 
\max_{i,j=1,\dots,p}|\hat\Theta_{ij}^2  - ({\Theta}^0_{ij})^2 | 
\\\nonumber
&\leq & \max_{i,j=1,\dots,p}|\hat\Delta_{ii}\hat\Delta_{jj} + \Theta^0_{ii}\hat\Delta_{jj}+\Theta^0_{jj}\hat\Delta_{ii}| 
\\\nonumber
&&+ \;
\max_{i,j=1,\dots,p}|\hat\Delta_{ij}(\hat\Delta_{ij} + 2\Theta^0_{ij})|
\\
\label{cons}
&\leq&   
C_\tau\sqrt{s{\log p}/{n}},
\end{eqnarray}
for some constant $C_\tau>0.$
\end{proof}


\begin{proof}[Lemma \ref{var.general}]
To simplify notation, we write $\Theta:=\Theta_0.$  
We have
\begin{eqnarray*}
\max_{i,j=1,\dots,p}|\hat\sigma_{ij}^2 -\sigma_{ij}^2| &= &
\max_{i,j=1,\dots,p}|\left\{\frac{1}{n}\sum_{k=1}^n (\hat\Theta_i^T X_k X_k^T \hat\Theta_j)^2- \hat\Theta_{ij}^2 \right\}
\\&& -
\left\{\mathbb E(\Theta_i X_1X_1^T \Theta_j)^2 - \Theta_{ij}^2\right\}|
\\
&\leq &
\max_{i,j=1,\dots,p}|\underbrace{\frac{1}{n}\sum_{k=1}^n (\hat\Theta_i^T X_k X_k^T \hat\Theta_j)^2
-
\mathbb E(\Theta_i X_1X_1^T \Theta_j)^2| }_{I}
\\
&&+\max_{i,j=1,\dots,p}|\underbrace{
\hat\Theta_{ij}^2 - \Theta_{ij}^2 
}_{II}|.
\end{eqnarray*}
By \ref{eig}, $|\Theta_{ij}|=\mathcal O(1)$ and by Lemma \ref{rates},  we have
$$\max_{i,j=1,\dots,p}\|\hat\Theta_{i}-\Theta_i\|_2 = \mathcal O_{\mathbb P}(\sqrt{s{\log p}/{n}}).$$
Hence
\begin{eqnarray*}
\max_{i,j=1,\dots,p}|II| 
\leq
2\max_{i,j=1,\dots,p}|\Theta_{ij}||\hat\Theta_{ij} - \Theta_{ij}| + |\hat\Theta_{ij} - \Theta_{ij}|^2
 = \mathcal O_{\mathbb P}(\sqrt{s{\log p}/{n}}).
\end{eqnarray*}

\noindent
For the first term, we have
\begin{eqnarray*}
\max_{i,j=1,\dots,p}|I|&=&\max_{i,j=1,\dots,p}|\frac{1}{n}\sum_{k=1}^n (\hat\Theta_i^T X_k X_k^T \hat\Theta_j)^2
-
\mathbb E(\Theta_i X_1X_1^T \Theta_j)^2 |\\
&\leq & 
\max_{i,j=1,\dots,p}\underbrace{
|\frac{1}{n}\sum_{k=1}^n (\hat\Theta_i^T X_k X_k^T \hat\Theta_j)^2 - (\Theta_i^T X_k  X_k^T\Theta_j )^2|
}_{\text{rem}_1}
\\
&&+\underbrace{\max_{i,j=1,\dots,p}
| \frac{1}{n}\sum_{k=1}^n (\Theta_i^T X_k  X_k^T\Theta_j )^2 
-  \mathbb E(\Theta_i X_kX_k^T \Theta_j)^2
|}_{\text{rem}_{2}}.
\end{eqnarray*}

\noindent
By symmetrization,
\begin{eqnarray*}
\mathbb E |\text{rem}_{2}| &\leq & 
2\mathbb E\max_{i,j=1,\dots,p} |\frac{1}{n}\sum_{k=1}^n (\Theta_i^T X_k  X_k^T\Theta_j )^2 \epsilon_k|,
\end{eqnarray*}
where $\epsilon_k,k=1,\dots,n$ is a sequence of independent Radechamer random variables, independent of $\mathbf X$.
Let $K>0$ and consider  truncation by $K$ as follows
\begin{eqnarray*}
\mathbb E|\text{rem}_{2}| &\leq & 
2\mathbb E\underbrace{
\max_{i,j=1,\dots,p}\lvert \frac{1}{n}\sum_{k=1}^n (\Theta_i^T X_k  X_k^T\Theta_j )^2 \epsilon_k 
1_{    (\Theta_i^T X_k  X_k^T\Theta_j )^2\leq K }\rvert
}_{r_1}\\
&&+
2\mathbb E\underbrace{
\max_{i,j=1,\dots,p}|\frac{1}{n}\sum_{k=1}^n (\Theta_i^T X_k  X_k^T\Theta_j )^2 \epsilon_k 
1_{(\Theta_i^T X_k  X_k^T\Theta_j )^2> K }|
}_{r_2}
,
\end{eqnarray*}
The term $r_1$ can be bounded using Hoeffding's inequality since for $i,j=1,\dots,p$ and $k=1,\dots,n$ it holds
$$|(\Theta_i^T X_k  X_k^T\Theta_j )^2 \epsilon_k 
1_{(\Theta_i^T X_k  X_k^T\Theta_j )^2\leq K }|\leq K.$$
 Thus for  any $a>0$
\begin{eqnarray*}
&& \mathbb P(
\max_{i,j=1,\dots,p}|\frac{1}{n}\sum_{k=1}^n (\Theta_i^T X_k  X_k^T\Theta_j )^2 \epsilon_k 
1_{(\Theta_i^T X_k  X_k^T\Theta_j )^2\leq K}| \geq a) 
\\
&& \leq e^{-na^2 / (2K^2)}.
\end{eqnarray*}
Hence integrating over $a,$ we get for the expectation
$$\mathbb E r_1 = \int_0^\infty \mathbb P(r_1>a) da \leq \frac{\sqrt{2\pi}}{2}\frac{K}{\sqrt{n}}.$$
Now we bound $\mathbb E r_2.$
Rewriting the expectation and using the union bound, we obtain
\begin{eqnarray*}
\mathbb E r_2  &= & \int_0^{\infty} \mathbb P(r_2 >x)dx 
\\
&\leq & p^2 n \int_0^{\infty} \max_{i,j=1,\dots,p}\max_{k=1,\dots,n} \mathbb P( |(\Theta_i^T X_k  X_k^T\Theta_j )^2 \epsilon_k
1_{(\Theta_i^T X_k  X_k^T\Theta_j )^2> K }| > x) dx
\\
&\leq &p^2 n
\int_0^{\infty} \max_{i,j=1,\dots,p}\max_{k=1,\dots,n} \mathbb P( (\Theta_i^T X_k  X_k^T\Theta_j )^2 > \max(x,K) ) dx
\end{eqnarray*}
Next by the sub-Gaussianity condition \ref{subgv}, we have
\begin{eqnarray*}
\mathbb P \left(
(\Theta_i^T X_k  X_k^T\Theta_j )^2  
>\max(x,K)
\right)
&\leq &
 c_1 e^{-c_2 \max(x,K)^{1/2}},
\end{eqnarray*}
for some constants $c_1,c_2>0.$ 
Hence
\begin{eqnarray*}
 \mathbb E r_2 & \leq & p^2 n\int_0^{\infty}  c_1 e^{-c_2 \max(x,K)^{1/2}} dx
\\
&=& p^2n c_1 Ke^{-c_2 K^{1/2}}
 + p^2n c_1\frac{2}{c_2}\left(K^{1/2}+\frac{1}{c_2}\right)e^{-c_2 K^{1/2}}.
\end{eqnarray*}
We choose $K = n^{(1-\epsilon)/2}$ where the $\epsilon$ is taken from the assumptions of this lemma.
Then we get
\begin{eqnarray*}
 \mathbb E |\text{rem}_{2}| &\leq & \mathbb E r_1 +\mathbb E r_2 \\
& \leq & 
\frac{\sqrt{2\pi}}{2}n^{-\epsilon/2} 
+
p^2n c_1 n^{(1-\epsilon)/2}e^{-c_2 n^{(1-\epsilon)/4}}
 \\
&&+\; p^2n c_1\frac{2}{c_2}\left(n^{(1-\epsilon)/4}+\frac{1}{c_2}\right)e^{-c_2 n^{(1-\epsilon)/4}}.
\end{eqnarray*}
Then by the assumption $\log (p\vee n) /n^{(1-\epsilon)/4} = o(1),$ we obtain that the above bound converges to zero for $n\rightarrow \infty.$
This implies that $|\text{rem}_{2}|=o_{\mathbb P}(1).$
\\
%
\noindent
To bound $\max_{i,j=1,\dots,p}|\text{rem}_{1}|,$ we denote $W_{i,k}:= \Theta_i^T X_k$ and $\hat W_{i,k} := \hat\Theta_i^T X_k$ for $i=1,\dots,p$ and $k=1,\dots,n$.
Then we can rewrite
\begin{eqnarray*}
\text{rem}_{1}&=&
|\frac{1}{n}\sum_{k=1}^n 
(\hat W_{i,k} \hat W_{j,k})^2 - (W_{i,k}W_{j,k} )^2|.
\end{eqnarray*}
Observe that 
\begin{eqnarray*} \label{expan}
\text{rem}_{1}&=&
|\frac{1}{n}\sum_{k=1}^n 
(\hat W_{i,k} \hat W_{j,k})^2 - (W_{i,k}W_{j,k} )^2 |\\
&=& 
|\frac{1}{n}\sum_{k=1}^n 
 \{ (\hat W_{i,k}-W_{i,k}) (\hat W_{j,k}-W_{j,k}) \\\nonumber
&& \; + \;
(\hat W_{i,k}-W_{i,k}) W_{j,k}  \\\nonumber
&&\; + \; W_{i,k} (\hat W_{j,k} - W_{j,k}) + W_{i,k}W_{j,k} \}^2 
\\&&
- (W_{i,k}W_{j,k} )^2|. 
\end{eqnarray*}
To bound $|\text{rem}_{1}|,$ we make use of the last expression where we expand the curly bracket $\{\cdot\}^2$. 
We have
\begin{eqnarray*} \label{expanded}
\text{rem}_{1}&=&
\frac{1}{n}\sum_{k=1}^n 
 \{ (\hat W_{i,k}-W_{i,k}) (\hat W_{j,k}-W_{j,k}) \; + \;
(\hat W_{i,k}-W_{i,k}) W_{j,k}  \\\nonumber
&&\; + \; W_{i,k} (\hat W_{j,k} - W_{j,k}) + W_{i,k}W_{j,k} \}^2 
- (W_{i,k}W_{j,k} )^2\\\nonumber
&=& \frac{1}{n}\sum_{k=1}^n 
(\hat W_{i,k}-W_{i,k})^2 (\hat W_{j,k}-W_{j,k})^2  \\
\nonumber
&&+\;
2(\hat W_{i,k}-W_{i,k})^2(\hat W_{j,k}-W_{j,k}) W_{j,k}\\\nonumber
&&  +
2 (\hat W_{i,k}-W_{i,k})(\hat W_{j,k}-W_{j,k})^2 W_{i,k} 
\\\nonumber
&&
 +\;4(\hat W_{i,k}-W_{i,k}) (\hat W_{j,k}-W_{j,k})W_{i,k}W_{j,k}
\\\nonumber
&&+(\hat W_{i,k}-W_{i,k})^2 W_{j,k}^2 
+
2(\hat W_{i,k}-W_{i,k}) W^2_{j,k}  W_{i,k}\\\nonumber
&&+W_{i,k}^2 (\hat W_{j,k} - W_{j,k})^2 + 2W^2_{i,k} (\hat W_{j,k} - W_{j,k})W_{j,k}.
\end{eqnarray*}
\noindent
By (iteratively) applying the Cauchy-Schwarz (C-S) inequality to each summation term in the last display, we can bound each term by a term involving $\frac{1}{n}\sum_{k=1}^n (\hat W_{l,k}-W_{l,k})^4$ and $\frac{1}{n}\sum_{k=1}^n W_{l,k}^4$, $l=i,j$.
Consequently, it suffices to find a rate for 
$$\max_{l=1,\dots,p}\frac{1}{n}\sum_{k=1}^n (\hat W_{l,k}-W_{l,k})^4$$ and to show that $
\max_{l=1,\dots,p}\frac{1}{n}\sum_{k=1}^n W_{l,k}^4=\mathcal O_{\mathbb P}(1).$ Then 
$$\max_{i,j=1,\dots,p}|\text{rem}_{1}|=\mathcal O_{\mathbb P}(\max_{l=1,\dots,p}(\frac{1}{n}\sum_{k=1}^n (\hat W_{l,k}-W_{l,k})^4)^{1/4})).$$
We now show that 
\begin{equation}
\label{prva1}
\max_{l=1,\dots,p}\frac{1}{n}\sum_{k=1}^n (\hat W_{l,k}-W_{l,k})^4 = \mathcal O_{\mathbb P}((s\log p /\sqrt{n})^2),
\end{equation} 
and  that
\begin{equation}
\label{druha}
\max_{l=1,\dots,p}\frac{1}{n}\sum_{k=1}^n W_{l,k}^4= \mathcal O_{\mathbb P}(1).
\end{equation}
The claim (\ref{prva1}) follows by Lemma \ref{var.general.aux} below.
We now show (\ref{druha}).
When bounding $\max_{i,j=1,\dots,p}|\text{rem}_{2}|$ above, we have shown that for all $a>0$ 
$$\lim_{n\rightarrow \infty}\mathbb P(\max_{i,j=1,\dots,p}|\frac{1}{n}\sum_{k=1}^n W_{i,k}^2W_{j,k}^2-\mathbb EW_{i,k}^2W_{j,k}^2|\geq a)
= 
0.$$
But then since $\mathbb EW_{i,k}^2W_{j,k}^2= \mathcal O(1),$ it follows that also (note here that the case when $i=j$ is covered)
$$\lim_{n\rightarrow \infty}\mathbb P(\max_{i,j=1,\dots,p}\frac{1}{n}\sum_{k=1}^n W_{i,k}^2W_{j,k}^2 \geq C_2) =0,$$
for some constant $C_2>0.$\\ 
Collecting all the results above,  for any $\eta>0$ we have
$$\lim_{n\rightarrow \infty}\mathbb P(\max_{i,j=1,\dots,p}|\hat\sigma_{ij}^2 -\sigma_{ij}^2|\geq \eta) =0.$$
 
\end{proof}

\begin{lema}\label{var.general.aux}
Under the assumptions of Lemma \ref{var.general} (and using notation of Lemma \ref{var.general}), on the set $\cap_{j=1}^p\mathcal T_j$ it holds
$$\max_{l=1,\dots,p}\frac{1}{n}\sum_{k=1}^n (\hat W_{l,k}-W_{l,k})^4 = \mathcal O_{}((s\log p /\sqrt{n})^2).$$
\end{lema}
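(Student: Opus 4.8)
The plan is to reduce the empirical fourth moment to the $\ell_1$-estimation error of the nodewise Lasso by a crude Hölder bound, and then to dispose of the remaining empirical fourth moment of the design via a sub-Gaussian moment inequality. Writing $\delta_l := \hat\Theta_l - \Theta_l^0$, the quantity in the statement is linear in the design: $\hat W_{l,k} - W_{l,k} = (\hat\Theta_l - \Theta_l)^T X_k = \delta_l^T X_k$. The first step is therefore to apply Hölder's inequality with the conjugate pair $(1,\infty)$ to get $(\delta_l^T X_k)^4 \le \|\delta_l\|_1^4 \|X_k\|_\infty^4$, so that
$$\max_{l=1,\dots,p}\frac1n\sum_{k=1}^n (\hat W_{l,k}-W_{l,k})^4 \le \Big(\max_{l=1,\dots,p}\|\delta_l\|_1\Big)^4 \cdot \frac1n\sum_{k=1}^n \|X_k\|_\infty^4.$$

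For the first factor I would invoke Lemma \ref{rates}: on $\cap_{j=1}^p\mathcal T_j$ one has $\max_l\|\delta_l\|_1 \le C_\tau s\sqrt{\log p/n}$, hence $(\max_l\|\delta_l\|_1)^4 \le C_\tau^4 s^4(\log p)^2/n^2$. For the second factor I would use that under \ref{subgv} each coordinate $X_{1,i}$ is sub-Gaussian, so the maximum over the $p$ coordinates obeys the standard fourth-moment bound $\mathbb E\|X_1\|_\infty^4 = \mathcal O((\log p)^2)$ (equivalently $\big\|\,\|X_1\|_\infty\big\|_{L^4} = \mathcal O(\sqrt{\log p})$). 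Applying Markov's inequality to the nonnegative average then yields $\frac1n\sum_k\|X_k\|_\infty^4 = \mathcal O_{\mathbb P}((\log p)^2)$. It is essential to control the \emph{average} rather than the maximum of $\|X_k\|_\infty^4$: the resulting rate then depends on $p$ only, not on $p\vee n$, which is what keeps the final bound valid in the regime $n\gg p$.

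Combining the two factors gives
$$\max_{l=1,\dots,p}\frac1n\sum_{k=1}^n(\hat W_{l,k}-W_{l,k})^4 = \mathcal O_{\mathbb P}\!\left(\frac{s^4(\log p)^4}{n^2}\right) = \mathcal O_{\mathbb P}\!\left(\Big(\tfrac{s\log p}{\sqrt n}\Big)^2\cdot\Big(\tfrac{s\log p}{\sqrt n}\Big)^2\right),$$
and since $(s\log p/\sqrt n)^2 = o(1)$ under \ref{sparsity.n}, the right-hand side is in fact $o((s\log p/\sqrt n)^2)$, which certainly gives the claimed $\mathcal O((s\log p/\sqrt n)^2)$. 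The deterministic $\ell_1$-bound is exactly what the event $\cap_j\mathcal T_j$ supplies; the design fourth-moment bound is an additional high-probability event, so the conclusion holds with probability tending to one, matching the $\mathcal O_{\mathbb P}$ form in which the parent Lemma \ref{var.general} invokes it through \eqref{prva1}.

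The only genuinely non-routine step is the control of $\frac1n\sum_k\|X_k\|_\infty^4$: one must verify the $L^4$-bound for a maximum of $p$ sub-Gaussian coordinates and then pass from its expectation to an $\mathcal O_{\mathbb P}$ statement. Everything else — the Hölder split and the insertion of the Lemma \ref{rates} rate — is bookkeeping, and the crude $(1,\infty)$ Hölder bound is comfortably sufficient precisely because \ref{sparsity.n} forces $(s\log p/\sqrt n)^2$ to vanish, so the extra $\ell_1$ error picked up by raising to the fourth power is harmless.
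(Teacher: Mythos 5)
Your proof is correct, but it takes a genuinely different route from the paper's. The paper never touches $\|X_k\|_\infty$: it first bounds the empirical \emph{second} moment, writing $\max_l \tfrac1n\sum_{k}(\hat W_{l,k}-W_{l,k})^2=\max_l\|\mathbf X(\hat\Theta_l-\Theta^0_l)\|_2^2/n$ and splitting the quadratic form as
$$(\hat\Theta_l-\Theta^0_l)^T\Sigma_0(\hat\Theta_l-\Theta^0_l)+(\hat\Theta_l-\Theta^0_l)^T(\hat\Sigma-\Sigma_0)(\hat\Theta_l-\Theta^0_l)\le \Lambda_{\max}(\Sigma_0)\|\hat\Theta_l-\Theta^0_l\|_2^2+\|\hat\Sigma-\Sigma_0\|_\infty\|\hat\Theta_l-\Theta^0_l\|_1^2,$$
which is $\mathcal O(s\log p/n)$ by Lemma \ref{rates} and the definition of $\mathcal T_j$; the fourth-moment bound then follows from the elementary inequality $\sum_k a_k^4\le\bigl(\sum_k a_k^2\bigr)^2$. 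The comparison: (i) the paper's bound is deterministic on $\cap_{j=1}^p\mathcal T_j$ (the sup-norm control of $\hat\Sigma-\Sigma_0$ is built into that event), which is literally what the lemma asserts, whereas your argument needs the additional stochastic event $\tfrac1n\sum_k\|X_k\|_\infty^4=\mathcal O_{\mathbb P}((\log p)^2)$ and hence proves only an in-probability version — you flag this yourself, and it suffices for the sole use of the lemma via \eqref{prva1} in Lemma \ref{var.general}, but it does not literally deliver the ``on the set $\cap_{j=1}^p\mathcal T_j$'' formulation; (ii) your H\"older bound gives $\mathcal O_{\mathbb P}(s^4(\log p)^4/n^2)$, which under \ref{sparsity.n} is in fact \emph{smaller} than the target $(s\log p/\sqrt n)^2$ — the paper loses a factor $n$ in the step $\sum_k a_k^4\le(\sum_k a_k^2)^2$ and lands exactly on the target — but correspondingly your route needs the full strength of \ref{sparsity.n} (i.e. $s^2(\log p)^2/n=\mathcal O(1)$) to absorb the extra factors, while the paper's route only needs $s\sqrt{\log p/n}=\mathcal O(1)$; (iii) your sub-Gaussian ingredient, $\mathbb E\|X_1\|_\infty^4=\mathcal O(K^4(\log p)^2)$ obtained from \ref{subgv} applied to the coordinate directions followed by Markov's inequality, is standard and correct, and is the only place where probabilistic input beyond $\cap_{j=1}^p\mathcal T_j$ enters. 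Both arguments are valid under the hypotheses of Lemma \ref{var.general}.
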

\begin{proof}
We will first show that 
$$\max_{l=1,\dots,p}\frac{1}{n}\sum_{k=1}^n (\hat W_{l,k}-W_{l,k})^2 = \mathcal O_{}(s\log p /{n}).$$
To see this, observe that for  $l\in\{i,j\},$
\begin{eqnarray*}
\max_{l=1,\dots,p}\frac{1}{n} \sum_{k=1}^n (\hat W_{l,k}-W_{l,k})^2 
&=& \max_{l=1,\dots,p}\|\mathbf X(\hat\Theta_l-\Theta_l)\|_2^2/n,
\\
&\leq &\max_{l=1,\dots,p}
(\hat\Theta_l-\Theta_l)^T \Sigma_0 (\hat\Theta_l-\Theta_l) \\
&&
+\;\max_{l=1,\dots,p} |(\hat\Theta_l-\Theta_l)^T (\hat\Sigma -\Sigma_0)(\hat\Theta_l-\Theta_l)|
\\
&\leq &
\Lambda_{\max}(\Sigma_0) \max_{l=1,\dots,p}\|\hat\Theta_l-\Theta_l\|_2^2 
\\
&&+\; 
\|\hat\Sigma -\Sigma_0\|_\infty \max_{l=1,\dots,p} \|\hat\Theta_l-\Theta_l\|_1^2.
\end{eqnarray*}
By the above bound and by Lemma \ref{rates}, 
we observe
$$\max_{l=1,\dots,p}\frac{1}{n}\sum_{k=1}^n (\hat W_{l,k}-W_{l,k})^2 = \mathcal O_{\mathbb P}(s\log p /{n}),$$ 
as required. Rewriting this as
$$\max_{l=1,\dots,p}\frac{1}{\sqrt{n}}\sum_{k=1}^n (\hat W_{l,k}-W_{l,k})^2 = \mathcal O_{}(s\log p /\sqrt{n}),$$ and taking square of both sides, we obtain
\begin{eqnarray*}
\max_{l=1,\dots,p}\frac{1}{n}\sum_{k=1}^n (\hat W_{l,k}-W_{l,k})^4 
&\leq &
\max_{l=1,\dots,p}\left(\frac{1}{\sqrt{n}}\sum_{k=1}^n (\hat W_{l,k}-W_{l,k})^2\right)^2\\
&=& 
\mathcal O_{}((s\log p /\sqrt{n})^2).
\end{eqnarray*}

\end{proof}



\subsection{Proofs for Section \ref{subsec:trates}}

\begin{proof}[Theorem \ref{trates}]
By the decomposition (\ref{hlavna}), we have
$$
\hat T - \Theta_0  = 
- \Theta_0(\hat\Sigma-\Sigma_0) \Theta_0
+\underbrace{ {(\Theta_0\hat\Sigma -I)(\Theta_0- \hat\Theta)}_{} 
+ {(\Theta_0-\hat\Theta)^T(\hat\Sigma \hat\Theta -I)}}_{\tilde \Delta}.
$$
Under \ref{eig}, \ref{sparsity}, \ref{subgv} 
by the proof of Lemma \ref{rem}, there exist $c_1,c_2$ such that with probability at least 
$c_1p^{1-c_2\tau}$ it holds that
$$\|\tilde\Delta\|_\infty = \mathcal O_{}(C_\tau {s\log p/n}).$$
By Lemma \ref{prel} in Appendix \ref{sec:concentration}, with probability at least $c_1e^{-c_2 \tau }$ it holds
$$|(\Theta^0_{i})^T (\hat\Sigma-\Sigma_0)\Theta^0_{j}| =\mathcal O_{}(C_\tau/\sqrt{n}).$$
Hence there exists a constant $C_\tau>0$ such that
$$\mathbb P\left(|\hat T_{ij}-\Theta^0_{ij}| 
\geq C_\tau  \max\left\{ \frac{1}{\sqrt{n}},   s\frac{\log p}{n}\right\}\right)
\leq c_3 \max\{p^{1-c_2\tau}, e^{-c_2 \tau }\}.
$$
But then since the constants $c_2,c_3$ are universal, we can take the supremum over the model.
By taking $\tau$ sufficiently large it follows that 
$$\sup_{\Theta_0\in\mathcal G(s)} \mathbb P\left(|\hat T_{ij}-\Theta^0_{ij}| 
\geq C_\tau  \max\left\{ \frac{1}{\sqrt{n}},   s\frac{\log p}{n}\right\}\right)
\leq c_4 e^{-\tau}.$$
We proceed to show the second result of the Theorem. We have 
by Lemma \ref{conc} in Appendix \ref{sec:concentration} that there exist constants $c_1,c_2$ such that with probability at least
$c_1p^{-c_2\tau}$ it holds 
$$\|\Theta_0(\hat\Sigma-\Sigma_0) \Theta_0\|_\infty =\mathcal O_{}(C_\tau\sqrt{\log p/n}).$$ 
We have $\|\hat T_{}-\Theta_{0}\|_\infty  \leq \|\tilde\Delta\|_\infty + \|\Theta_0(\hat\Sigma-\Sigma_0) \Theta_0\|_\infty.$
But then there exist constants $C_\tau,c_1,c_2>0$ such that
$$\mathbb P\left(\|\hat T_{}-\Theta_{0}\|_\infty 
\geq C_\tau\max\left\{\sqrt{\frac{\log p}{{n}}}, s\frac{\log p}{n}\right\}\right) \leq c_1 p^{1-c_2\tau}.
$$

\end{proof}


\subsection{Proofs for Section \ref{subsec:other.est}}

\begin{proof}[Lemma \ref{other.est}]
First note that
\begin{eqnarray*}
\hat T - \Theta_0 
&=&
\hat\Omega- \Theta_0 -\hat\Omega^T(\hat\Sigma\hat\Omega - I)
\\ 
&=&
\hat\Omega- \Theta_0 - \Theta_0 ^T (\hat\Sigma\hat\Omega - I) -(\hat\Omega-\Theta_0)^T (\hat\Sigma\hat\Omega - I)
\\
&=&
 -\Theta_0 (\hat\Sigma-\Sigma_0)\Theta_0 -\underbrace{ (\Theta_0\hat\Sigma - I) (\hat\Omega-\Theta_0)}_{rem_1}-
\underbrace{(\hat\Omega-\Theta_0)^T (\hat\Sigma\hat\Omega - I)}_{rem_2}.
\end{eqnarray*}
Under \ref{eig} and \ref{subgv}  we have  
$$\max_{j=1,\dots,p}\|\hat \Sigma \Theta_j^0 - e_j\|_\infty =\mathcal O_{\mathbb P}(1/\sqrt{n}).
$$
For the first remainder we then have
\begin{eqnarray*}
\|rem_1\|_\infty &=& \|(\Theta_0\hat\Sigma - I) (\hat\Omega-\Theta_0)\|_\infty \leq \| \Theta_0\hat\Sigma - I\|_\infty \vertiii{\Theta_0 - \hat\Omega}_1\\
&=&\mathcal O_{\mathbb P}(s\log p/n).
\end{eqnarray*}
For the second remainder we have
\begin{eqnarray*}
\|rem_2\|_\infty &=&
\|(\hat\Omega-\Theta_0)^T (\hat\Sigma\hat\Omega - I)\|_\infty \leq \| \hat\Sigma\hat\Omega - I\|_\infty \vertiii{\hat\Omega-\Theta_0}_1
\\
&=&\mathcal O_{\mathbb P}(s\log p/n).
\end{eqnarray*}
Collecting the above statements yields for $i,j=1,\dots,p$
\begin{eqnarray*}
\hat T _{ij}- \Theta^0_{ij} 
&=&(\Theta^0_i)^T(\hat\Sigma-\Sigma_0)\Theta^0_j + 
\mathcal O_{\mathbb P} (s\log p/n)\\
&=&(\Theta^0_i)^T(\hat\Sigma-\Sigma_0)\Theta^0_j +o_{\mathbb P}(1/\sqrt{n}).
\end{eqnarray*}
and hence we have
$$\sqrt{n}(\hat T_{ij}-\Theta_{ij}^0)/\sigma_{ij}\rightsquigarrow \mathcal N(0,1).$$
\end{proof}



\appendix

\section{Concentration results for sub-Gaussian design }
\label{sec:concentration}
\begin{lema}\label{bilinear}
Let $\alpha,\beta\in\mathbb R^p$ such that $\|\alpha\|_2 \leq M,\|\beta\|_2\leq M.$
Let $X_k\in\mathbb R^p$ satisfy the sub-Gaussianity assumption \ref{subgv} 
with a constant $K>0.$ 
Then for $m\geq 2,$
$$\mathbb E|\alpha ^T X_k X_k^T\beta - \mathbb E \alpha ^T X_k X_k^T\beta |^m / (2 M^2K^2)^{m} \leq 
\frac{m!}{2}.
$$
\end{lema}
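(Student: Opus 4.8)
The plan is to reduce the bilinear form to a product of two scalar linear combinations and exploit the fact that such a product is sub-exponential. Write $U := \alpha^T X_k$ and $V := \beta^T X_k$, so that $\alpha^T X_k X_k^T \beta = UV$ and the quantity to control is $\mathbb E|UV - \mu|^m$ with $\mu := \mathbb E[UV]$. Since $\|\alpha\|_2 \le M$, the vector $\alpha/M$ lies in the unit ball, so Assumption \ref{subgv} gives $\mathbb E\exp(U^2/(M^2K^2)) \le 2$, and likewise $\mathbb E\exp(V^2/(M^2K^2)) \le 2$ from $\|\beta\|_2 \le M$.

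First I would upgrade these two one-variable exponential bounds to a joint bound on the product. Using $|UV| \le \tfrac12(U^2 + V^2)$ and then the Cauchy--Schwarz inequality,
\[
\mathbb E\exp\!\left(\frac{|UV|}{M^2K^2}\right)
\le \mathbb E\exp\!\left(\frac{U^2+V^2}{2M^2K^2}\right)
\le \sqrt{\mathbb E\exp\!\left(\frac{U^2}{M^2K^2}\right)}\,\sqrt{\mathbb E\exp\!\left(\frac{V^2}{M^2K^2}\right)}\le 2,
\]
which exhibits $UV$ as sub-exponential with scale $M^2K^2$. Taylor-expanding the exponential and discarding the $j=0$ term then yields the factorial moment bounds $\mathbb E|UV|^j \le j!\,(M^2K^2)^j$ for every $j \ge 1$; in particular $|\mu| \le \mathbb E|UV| \le M^2K^2$.

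The final step is the centering, which is where the stated constant is delicate. For $m=2$ I would simply use $\mathbb E|UV-\mu|^2 = \text{var}(UV) \le \mathbb E(UV)^2 \le 2(M^2K^2)^2 \le \tfrac{2!}{2}(2M^2K^2)^2$. For $m \ge 3$ I would expand $(|UV|+|\mu|)^m$ by the binomial theorem, insert the moment bounds above together with $|\mu| \le M^2K^2$, and use $\sum_{i=0}^m 1/i! \le e$ to obtain $\mathbb E|UV-\mu|^m \le e\,m!\,(M^2K^2)^m$; since $e \le 2^{m-1}$ for $m \ge 3$, this is at most $\tfrac{m!}{2}(2M^2K^2)^m$, as required.

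The routine parts are the Taylor expansion and the binomial bookkeeping; the only genuine subtlety is making the Bernstein-type constant $\tfrac{m!}{2}$ hold uniformly in $m$, and in particular checking the boundary case $m=2$ separately rather than through the crude factor $2^m$ that the triangle inequality would produce. I expect this constant tracking --- not any analytic difficulty --- to be the main thing to get right.
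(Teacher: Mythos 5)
Your proposal is correct. Note first that the paper itself contains no proof of this lemma: in Appendix A it simply states Lemmas \ref{bilinear}--\ref{conc} and refers to \cite{jvdgeer14}, so your argument is compared here only against the standard route that citation follows, which is indeed the one you take (reduce the bilinear form to the product $UV$ of two sub-Gaussian scalars via $|UV|\leq \tfrac12(U^2+V^2)$, conclude sub-exponentiality, then center). All the delicate points check out: rescaling $\alpha/M$, $\beta/M$ to invoke \ref{subgv}; the Cauchy--Schwarz step giving $\mathbb E\exp(|UV|/(M^2K^2))\leq 2$; the refinement of discarding the $j=0$ term so that $\mathbb E|UV|^j\leq j!\,(M^2K^2)^j$ (rather than $2\,j!\,(M^2K^2)^j$, which would break the case $m=3$ since $2e>4$); the binomial expansion giving the factor $\sum_{j=0}^m 1/j!\leq e\leq 2^{m-1}$ for $m\geq 3$; and the separate variance argument for $m=2$, where the crude $e\,m!$ bound would exceed $\tfrac{m!}{2}2^m$. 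Your closing remark is accurate: the only real content is the constant bookkeeping, and you have done it correctly, producing a self-contained proof of a statement the paper leaves to an external reference.
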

\noindent
\begin{lema}\label{prel}
Let $\alpha,\beta\in\mathbb R^p$ such that $\|\alpha\|_2 \leq M,\|\beta\|_2\leq M.$
Let $X_k\in\mathbb R^p$ satisfy the sub-Gaussianity assumption \ref{subgv} 
with a constant $ K>0.$ 
For all $t>0$ 
$$\mathbb P \left( |\alpha^T \hat\Sigma \beta - \alpha^T \Sigma_0 \beta | /(2M^2K^2)
>
t + \sqrt{2t}
\right) \leq 2e^{-nt}.$$
\end{lema}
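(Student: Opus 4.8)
The plan is to recognize the bilinear deviation as an average of i.i.d.\ centered random variables and to apply a Bernstein-type concentration inequality whose moment hypothesis is supplied directly by Lemma \ref{bilinear}. Since the rows $X_k$ are i.i.d.\ and $\hat\Sigma = \frac1n\sum_{k=1}^n X_kX_k^T$, I would first write
\[
\alpha^T\hat\Sigma\beta - \alpha^T\Sigma_0\beta = \frac1n\sum_{k=1}^n Y_k, \qquad Y_k := \alpha^T X_kX_k^T\beta - \mathbb E[\alpha^TX_kX_k^T\beta],
\]
so that $Y_1,\dots,Y_n$ are i.i.d.\ with mean zero. Lemma \ref{bilinear} then furnishes exactly the control I need: with $L := 2M^2K^2$ it gives $\mathbb E|Y_k|^m \le \frac{m!}{2}L^m = \frac{m!}{2}L^2L^{m-2}$ for every $m\ge 2$, which is a Bernstein moment condition with variance proxy $L^2$ and scale parameter $L$.

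Next I would convert this moment bound into a moment generating function estimate. Summing the Taylor series of the exponential, invoking the moment bound termwise, and using $1+u\le e^u$ yields, for $0\le\lambda<1/L$,
\[
\mathbb E e^{\lambda Y_k} \le \exp\!\left(\frac{L^2\lambda^2/2}{1-L\lambda}\right),
\]
and by independence $\mathbb E e^{\lambda\sum_k Y_k} \le \exp\big(nL^2\lambda^2/\{2(1-L\lambda)\}\big)$. A Chernoff bound then gives $\mathbb P(\sum_k Y_k \ge y) \le \exp(-\lambda y + nL^2\lambda^2/\{2(1-L\lambda)\})$ for every admissible $\lambda>0$.

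The one delicate point — and really the crux of the argument — is the choice of $\lambda$. The naive optimizer $\lambda=y/(nL^2+Ly)$ produces only the looser exponent form $\exp(-y^2/\{2(nL^2+Ly)\})$, which does \emph{not} reproduce the clean threshold $t+\sqrt{2t}$. Instead I would substitute the target deviation $y = nL(\sqrt{2t}+t)$ together with $\lambda = \sqrt{2t}/\{L(1+\sqrt{2t})\}$; a short algebraic identity then collapses the exponent to exactly $-nt$, yielding $\mathbb P(\frac1n\sum_k Y_k \ge L(\sqrt{2t}+t)) \le e^{-nt}$. Dividing through by $L=2M^2K^2$ gives the upper-tail bound in the stated normalization. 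Applying the identical argument to $-Y_k$, whose moments coincide with those of $Y_k$, controls the lower tail, and a union bound over the two tails supplies the factor $2$. The verification that this particular $\lambda$ drives the exponent down to $-nt$ is where the whole estimate hinges, but it is a routine computation rather than a genuine obstacle.
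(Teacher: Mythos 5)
Your proof is correct and takes essentially the route the paper intends: the paper defers this lemma to \cite{jvdgeer14}, where it is obtained by viewing $\alpha^T\hat\Sigma\beta-\alpha^T\Sigma_0\beta$ as an i.i.d.\ average whose summands satisfy the Bernstein moment condition supplied by Lemma \ref{bilinear}, and then invoking Bernstein's inequality in precisely the form $\mathbb P\bigl(\bigl|\tfrac1n\sum_k Y_k\bigr| \geq Lt + L\sqrt{2t}\bigr)\leq 2e^{-nt}$. Your only departure is that you re-derive that Bernstein bound from the moment-generating-function estimate instead of citing it, and your choice $\lambda = \sqrt{2t}/\bigl(L(1+\sqrt{2t})\bigr)$ is correct: since $1-L\lambda = 1/(1+\sqrt{2t})$, the exponent equals $-nt(2+\sqrt{2t})/(1+\sqrt{2t}) + nt/(1+\sqrt{2t}) = -nt$, as claimed.
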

\noindent
\begin{lema} \label{conc}
Assume $\|\alpha_i\|_2 \leq M,\|\beta\|_2\leq M$ for all $i=1,\dots,p$ and \ref{subgv} with $K$.
For all $t>0$  it holds
$$\mathbb P 
\left( \max_{i=1,\dots,p} |\alpha_i^T(\hat\Sigma -\Sigma_0) \beta  | / (2M^2K^2)
>
t + \sqrt{2t} + \sqrt{\frac{2\log (2p)}{n}} +\frac{\log (2p)}{n}
\right) \leq e^{-nt}.$$
\end{lema}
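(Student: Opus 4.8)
The plan is to reduce the maximum over $i$ to a single union bound, invoking the one-dimensional deviation inequality of Lemma \ref{prel} for each index $i$ and then choosing the deviation level so that the resulting prefactor $2p$ is absorbed into the exponential. First I would write $W_i := |\alpha_i^T(\hat\Sigma-\Sigma_0)\beta|/(2M^2K^2)$ and observe that, since $\|\alpha_i\|_2\le M$ and $\|\beta\|_2\le M$ for every $i$ and \ref{subgv} holds with constant $K$, Lemma \ref{prel} applies to each fixed pair $(\alpha_i,\beta)$ and yields, for any $t'>0$,
$$\mathbb P\left(W_i > t' + \sqrt{2t'}\right)\le 2e^{-nt'}.$$
A plain union bound over $i=1,\dots,p$ then gives
$$\mathbb P\left(\max_{i=1,\dots,p} W_i > t' + \sqrt{2t'}\right)\le 2p\,e^{-nt'}.$$

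The next step is to reparametrize so that the factor $2p$ disappears. Setting $t' := t + \tfrac{\log(2p)}{n}$, the right-hand side becomes $2p\,e^{-nt-\log(2p)}=e^{-nt}$, which is exactly the target probability. It then remains to check that this choice of $t'$ makes the threshold $t'+\sqrt{2t'}$ no larger than the threshold in the statement. Here I would use subadditivity of the square root, $\sqrt{a+b}\le\sqrt a+\sqrt b$, applied to the term $\sqrt{2t'}=\sqrt{2t+2\log(2p)/n}$, to obtain
$$t'+\sqrt{2t'} = t + \frac{\log(2p)}{n} + \sqrt{2t + \frac{2\log(2p)}{n}} \le t + \sqrt{2t} + \sqrt{\frac{2\log(2p)}{n}} + \frac{\log(2p)}{n},$$
which is precisely the deviation level $u$ appearing in Lemma \ref{conc}. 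Since $\{\max_i W_i > u\}\subset\{\max_i W_i > t'+\sqrt{2t'}\}$ whenever $t'+\sqrt{2t'}\le u$, monotonicity of probability closes the argument and gives $\mathbb P(\max_i W_i>u)\le e^{-nt}$.

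The argument is essentially bookkeeping rather than a deep estimate, so there is no serious analytic obstacle once Lemma \ref{prel} is in hand. The one point requiring a little care is the placement of the two correction terms $\sqrt{2\log(2p)/n}$ and $\log(2p)/n$: they arise respectively from the $\sqrt{2t'}$ part and the linear $t'$ part of the one-sided bound, and the square-root splitting must be applied to the $\sqrt{2t'}$ term alone, leaving the linear $\log(2p)/n$ untouched. I would also remark that Lemma \ref{prel} already controls the two-sided deviation (the absolute value $|\cdot|$), so no separate union over signs is needed; the factor $2$ in $2e^{-nt'}$ is exactly what the use of $\log(2p)$ rather than $\log p$ absorbs when we solve $2p\,e^{-nt'}=e^{-nt}$.
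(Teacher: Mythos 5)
Your proof is correct. The paper itself does not prove Lemma \ref{conc}; it defers this (together with Lemmas \ref{bilinear} and \ref{prel}) to the reference \cite{jvdgeer14}, so there is no in-paper argument to compare against. Your derivation---a union bound over the $p$ indices applied to Lemma \ref{prel}, followed by the reparametrization $t' = t + \log(2p)/n$ and the subadditivity bound $\sqrt{2t+2\log(2p)/n}\le\sqrt{2t}+\sqrt{2\log(2p)/n}$---is exactly what the form of the threshold $t+\sqrt{2t}+\sqrt{2\log(2p)/n}+\log(2p)/n$ and the disappearance of the factor $2p$ suggest was intended, and every step (the event inclusion, the absorption of $2p$ into the exponent, and the two-sidedness of Lemma \ref{prel} making a separate sign union unnecessary) checks out.
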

For the proofs of Lemmas \ref{bilinear}, \ref{prel}, \ref{conc}, see \cite{jvdgeer14}.

\section{Rates of convergence of the nodewise Lasso}
\label{sec:rates2}

\begin{lema}
\label{modkkt}
Let $\hat\Theta_j$ be obtained as in \eqref{nrdef}.  
Then it holds
$\hat\Sigma \hat\Theta_j - e_j - \lambda_j\hat Z_j=0.$
\end{lema}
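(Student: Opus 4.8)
The plan is to compute $\hat\Sigma\hat\Theta_j$ directly from the definitions and match it against $e_j + \lambda_j\hat Z_j$ coordinate by coordinate, using the KKT conditions \eqref{kkt} for the nodewise regression. First I would observe that since $\hat\Gamma_j$ places a $1$ in the $j$-th slot and $-\hat\gamma_{j,k}$ elsewhere, multiplying by the design yields the residual vector $\mathbf X\hat\Gamma_j = X_j - \mathbf X_{-j}\hat\gamma_j =: \hat\eta_j$. Hence $\hat\Sigma\hat\Gamma_j = \mathbf X^T\mathbf X\hat\Gamma_j/n = \mathbf X^T\hat\eta_j/n$, and it suffices to identify the $p$ entries of this vector and then divide by $\hat\tau_j^2$.

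For the off-diagonal coordinates $k\neq j$, the $k$-th entry of $\mathbf X^T\hat\eta_j/n$ is exactly $X_k^T(X_j-\mathbf X_{-j}\hat\gamma_j)/n$, i.e. the corresponding row of $\mathbf X_{-j}^T(X_j-\mathbf X_{-j}\hat\gamma_j)/n$. The stationarity condition \eqref{kkt} gives $\mathbf X_{-j}^T(X_j-\mathbf X_{-j}\hat\gamma_j)/n = \lambda_j\hat\kappa_j$, so these entries are precisely $\lambda_j\hat\kappa_{j,k}$.

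The one step requiring a small computation is the diagonal entry $k=j$, namely $X_j^T\hat\eta_j/n$. Here I would split $X_j = \hat\eta_j + \mathbf X_{-j}\hat\gamma_j$ to write $X_j^T\hat\eta_j/n = \hat\eta_j^T\hat\eta_j/n + \hat\gamma_j^T(\mathbf X_{-j}^T\hat\eta_j/n)$. Applying the KKT identity again turns the second term into $\lambda_j\hat\gamma_j^T\hat\kappa_j = \lambda_j\|\hat\gamma_j\|_1$, using that $\hat\kappa_j$ is the subgradient of $\|\cdot\|_1$ at $\hat\gamma_j$, so the inner product collapses to the $\ell_1$-norm. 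Recognizing $\hat\eta_j^T\hat\eta_j/n = \|X_j-\mathbf X_{-j}\hat\gamma_j\|_2^2/n$ and comparing with the definition of $\hat\tau_j^2$, the $j$-th entry equals exactly $\hat\tau_j^2$.

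Assembling, $\hat\Sigma\hat\Gamma_j$ has $j$-th coordinate $\hat\tau_j^2$ and $k$-th coordinate $\lambda_j\hat\kappa_{j,k}$ for $k\neq j$. Dividing by $\hat\tau_j^2$ (so that $\hat\Theta_j = \hat\Gamma_j/\hat\tau_j^2$) normalizes the diagonal entry to $1$ and scales the off-diagonal entries to $\lambda_j\hat\kappa_{j,k}/\hat\tau_j^2$; by the definition of $\hat Z_j$, which carries a $0$ in its $j$-th slot, this reads precisely as $e_j + \lambda_j\hat Z_j$, giving the claim. The only (mild) obstacle is the diagonal calculation, which hinges on the subgradient identity $\hat\gamma_j^T\hat\kappa_j = \|\hat\gamma_j\|_1$ and on matching the resulting expression to the stated form of the noise-level estimator $\hat\tau_j^2$; everything else is bookkeeping.
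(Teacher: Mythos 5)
Your proof is correct and follows essentially the same route as the paper's: both arguments use the KKT conditions \eqref{kkt} to identify the off-diagonal entries of $\hat\Sigma\hat\Gamma_j$ as $\lambda_j\hat\kappa_{j,k}$, and both use the subgradient identity $\hat\gamma_j^T\hat\kappa_j=\|\hat\gamma_j\|_1$ to show the $j$-th entry $X_j^T(X_j-\mathbf X_{-j}\hat\gamma_j)/n$ equals $\hat\tau_j^2$, before dividing by $\hat\tau_j^2$. The paper merely orders the same algebra differently (deriving $\hat\tau_j^2=X_j^T(X_j-\mathbf X_{-j}\hat\gamma_j)/n$ first, then the two block equations), so there is nothing to add.
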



\begin{proof}[Lemma \ref{modkkt}]
The KKT  conditions (\ref{kkt}) give 
\begin{equation}\label{kkt.again}
-\mathbf X_{-j}^T(X_j-\mathbf X_{-j}\hat\gamma_j)/n + \lambda_j \hat \kappa_j = 0.
\end{equation}
Multiplying (\ref{kkt.again}) by $\hat\gamma_j^T$ and 
since $\hat\gamma_j^T \hat \kappa_j = \|\hat\gamma_j\|_1$, we obtain 
\begin{equation}\label{tau}
\hat\tau_j^2 = X_{j}^T(X_j - \mathbf X_{-j}\hat\gamma_j)/n.
\end{equation}
Rewriting $X_j-\mathbf X_{-j}\hat\gamma_j = \mathbf X \hat\Gamma_j= \mathbf X \hat\Theta_j \hat\tau_j^2,$ from (\ref{tau}) we obtain 
\begin{equation}\label{one}
X_{j}^T \mathbf X\hat\Theta_{j}/n = 1.
\end{equation}
Similarly rewriting (\ref{kkt.again}) gives
\begin{equation}\label{two}
\mathbf X_{-j}^T\mathbf X\hat\Theta_j/n = \frac{\lambda_j}{\hat\tau_j^2}\kappa_j.
\end{equation}
Combining (\ref{one}) and (\ref{two}) we obtain $\mathbf X^T \mathbf X \hat\Theta_j /n - e_j = \lambda_j\hat Z_j,$
and rearranging, we get
$\hat\Sigma \hat\Theta_j - e_j - \lambda_j\hat Z_j=0$ as required.
\end{proof}

\end{document}